\newtheorem{thm}{Theorem}[section]
\newtheorem{defin}[thm]{Definition}
\newtheorem{rem}[thm]{Remark}
\newtheorem{exam}[thm]{Example}
\newtheorem{assum}[thm]{Assumption}
\newcommand{\J}{{\mathbb{J}}}
\newcommand{\R}{{\mathbb{R}}}
\newcommand{\Z}{{\mathbb{Z}}}
\newcommand{\cP}{{\mathcal{P}}}
\newcommand{\bal}{\begin{aligned}}
\newcommand{\enbal}{\end{aligned}}
\newcommand{\be}{\begin{equation}}
\newcommand{\ee}{\end{equation}}
\newcommand{\farc}{\frac}
\newcommand{\pdr}[2]{\frac{\partial{#1}}{\partial{#2}}}
\newcommand{\Rm}{{\mathbb R}}
\begin{document}

\title{A contact topological glossary for non-equilibrium thermodynamics}
\author{Michael Entov$^1$,  Leonid Polterovich$^2$ and Lenya Ryzhik$^3$}

\footnotetext[1]{Technion -- Israel Institute of
Technology, Israel}
\footnotetext[2]{Tel Aviv University, Israel}
\footnotetext[3]{Stanford University, USA.}

\maketitle

\begin{abstract}
We discuss the occurrence of some notions and results from contact topology
in the non-equilibrium thermodynamics. This includes the Reeb chords and the partial order on the space of Legendrian submanifolds.
\end{abstract}

\section{Introduction}

The contact geometric formulation of equilibrium thermodynamics is classical and traces back to Gibbs (see \cite{Her} for a modern exposition).
In that approach, one starts with a manifold~$X$ formed by the intensive thermodynamic variables (prescribed external physical parameters).
Each  macroscopic thermodynamic state  is represented by a point in the jet space $J^1X= T^*X \times \R$     that includes the intensive variables,
the extensive thermodynamic variables (entropy and the generalized pressures) and the free energy. The starting point of the equilibrium
contact thermodynamics is the observation that the set of all thermodynamic equilibria forms a Legendrian
submanifold in $J^1X$.

In contrast, there exists a variety of contact geometric approaches to mathematical modeling of non-equilibrium thermodynamic
processes \cite{BLN,EP,EPR,Goto-JMP2015,Goto,Goto24,GLP,GrO,Grmela,Haslach,LimOh,V}. In this note, we present yet another approach that examines
slow (quasi-static) irreversible processes from the perspective of the partial order on the space of Legendrian submanifolds,
a profound geometric structure studied within contact topology \cite{AA,CN,CFP}; see also \cite{ElP}. The starting observation is that
the first and second laws of thermodynamics impose   a constraint: any time-dependent path generated by an  out-of-the-equilibrium
thermodynamic process
must be non-negative with respect to the standard contact form on $J^1X$ (see
Definition \ref{def-positive-curve} below), which is  a reformulation of the result in~\cite{Haslach}. The main goal of this note is to discuss the
consequences of this observation.

We consider three types of non-equilibrium processes that respect
the non-negativity of the thermodynamic paths:\\
(1) In the slow
(quasi-static)
global\footnote{The term ``global" here emphasizes that we consider processes defined at all macroscopic equilibrium states of the system, and not only at some of them.}
processes, the change in the  external   parameters is so slow that, at each moment in time, the system is in equilibrium.
Such processes are given by non-negative families (or paths) of Legendrian equilibrium submanifolds. \\
(2) In the fast processes, after an abrupt change in the extensive variables,
the system
converges, as~$t\to+\infty$, to the
equilibrium corresponding to the new values of the parameters but is not at an equilibrium at the intermediate times $t>0$.
Such processes are described by a thermodynamic path  driven by a Fokker-Planck equation for the
probability density on the microscopic thermodynamic states.  \\
(3)  In the ultrafast (instantaneous) processes, the system jumps instantaneously from
one equilibrium into another after an abrupt change in the external parameters. They correspond to Reeb chords
connecting the initial and the terminal equilibrium Legendrian submanifolds.

One natural question that can be addressed by the tools of contact topology is the existence of an ultrafast processes
that would connect the equilibria of two thermodynamic systems. As we discuss in Theorem~\ref{thm-B} below,
if two equilibrium thermodynamic Legendrians are related by a slow temperature non-decreasing global process,
there is a  free-energy non-increasing  ultrafast process which starts at an equilibrium state of the first system and ends at an equilibrium state of the second system.

\begin{rem}
\label{rem-temperature-disclaimer}
{\rm
It should be mentioned that when the process is not quasi-static (that is,
if we cannot assume that at each moment of time
the system is in the equilibrium), the temperature of the system is not well defined
from the physical point of view. We always
assume that the system exchanges heat with a thermostat (or reservoir) which has a well defined temperature, and for
a system not in an equilibrium, we will consider the
temperature of the reservoir at the given time. In an equilibrium, the temperature of the system is equal to the
temperature of the thermostat (see \cite{Str}, formula (44)). In physical terms, we tacitly
assume that on the ``boundary of the system"  the temperature equals to that of the thermostat, while ``inside" the temperature is defined only in an equilibrium.
In this paper, we mostly consider the temperature as an external
parameter and ignore its relaxation to the equilibrium. Elimination of the entropy and the temperature
from the thermodynamic phase space is called {\it reduction}, see Sections \ref{sec:reduced} and \ref{sec:reduced-1}.

}
\end{rem}

We finish this brief introduction with Table~\ref{table-1} that sums up a contact topological glossary for thermodynamics.
\begin{table}[h]
\centering
\begin{tabular}{|l|l|}
\hline
Contact manifold & (Reduced) thermodynamic phase space \\ \hline
Contact form & The Gibbs fundamental form \\ \hline
Legendrian submanifold & (Reduced) equilibrium submanifold \\ \hline
A non-negative path &
 A non-equilibrium (temperature non-decreasing)  \\
 &  \\ \hline
A non-negative path of Legendrians & Slow (temperature non-decreasing) global process \\ \hline
Reeb chord between two Legendrians & An instant relaxation in the thermodynamic limit \\ \hline
\end{tabular}
\caption{Thermodynamic glossary}\label{table-1}
\end{table}

{\bf Organization of the paper.} Section~\ref{sec:glossary} discusses the construction of the equilibrium Legendrians starting
with the microscopic models and passing to the thermodynamic phase space. We also introduce the reduction process that allows
us to formally freeze certain intensive and extensive thermodynamic variables.  In Section~\ref{sec:positive} we explain the
relation between the first and second laws of thermodynamics and the positivity of the thermodynamic paths in phase space.
The definitions of the slow, fast and ultrafast processes are discussed in Section~\ref{sec:classify}.  Section~\ref{sec:slow} uses some non-trivial
results in symplectic topology to obtain
certain consequences of the existence of slow
global
processes connecting the equilibrium Legendrian submanifolds for two given thermodynamic
systems. Finally, in Section~\ref{sec-ultra}, we discuss the ultrafast processes in the examples of the ideal gas and Stirling engine, and
the Curie-Weiss magnet. Interestingly, for the Stirling engine, these ultrafast processes seem to be also observed experimentally~\cite{Stirling-Nature}.

\medskip
\noindent
{\bf Acknowledgment.} We thank Bernard Derrida, Leonid Levitov, Shin-itiro Goto, and Nikita Nekrasov for very useful comments.
Preliminary results of this paper were presented by LP at the Symplectic Geometry Seminar in
IAS, Princeton. The recording of this talk, called ``Contact topology meets thermodynamics", is available on YouTube. The paper was written during LP's sabbatical at the University of Chicago. LP thanks the Department of Mathematics for an exceptional research atmosphere.
ME was partially supported by the Israel Science Foundation grant 2033/23,
LP was partially supported by the Israel Science Foundation grant 1102/20, and
LR was partially supported by NSF grant  DMS-2205497 and by ONR grant N00014-22-1-2174.

%\section{Equilibrium thermodynamics in the phase space}\label{sec:equi}
%

\section{Thermodynamic equilibria as the Legendrian sub-manifolds}\label{sec:glossary}

The contact geometric notions become relevant for thermodynamics when it
is considered in the thermodynamic phase space -- an odd
dimensional space with the coordinates given by a thermodynamic potential and pairs
of conjugate thermodynamic variables. It is equipped with the Gibbs fundamental form,
which endows it with a contact structure. We shall deal with two versions of the thermodynamic
phase  space, the extended one described in Section~\ref{sec:extended}, and the reduced one introduced in
Section~\ref{sec:reduced}.

For reader's convenience, we first recall in Section~\ref{sec:micro} the connection between the original microscopic description
of thermodynamic equilibria and the macroscopic picture that eventually leads to the thermodynamic phase space.
We follow the framework  of~\cite{EPR}, with some minor modifications.

\subsection{Microscopic and macroscopic thermodynamic states}\label{sec:micro}

A microscopic state of a thermodynamic system is a point on a manifold~$M$,  equipped
with a measure  $\mu$. We let $\mathcal P$ be the collection of smooth positive densities  $\rho>0$
on $M$
so that~$\int_M \rho d\mu = 1$, and refer to an element  $\rho\in\mathcal P$ as a macroscopic  state.
Its entropy  is
\be\label{25feb1104}
S(\rho) = -\int_M \rho \ln \rho~d\mu,~~ {\rho\in{\mathcal P}}.
\ee
%\color{blue}\sout{Note that $  S(\rho)\ge 0$ for any $\rho\in{\mathcal P}$ by Jensen's inequality.}
%\color{black}

The free energy of the system is
\begin{equation}\label{eq-Phi-rho}
G(T,q,\rho)=%(q,\rho) \mapsto
-T S(\rho) + \int_M H(q,m) \rho(m)d\mu(m),~~q\in\Rm^n,~\rho\in\cP.
\end{equation}
Here, $q\in\Rm^n$ is an exterior physical parameter  and
$T>0$ is the  temperature of the system
(see Remark~\ref{rem-temperature-disclaimer}).
The Hamiltonian $H(q,m)$ often appears in the form
\be\label{25feb604}
H(q,m)=V_{\rm int}(m)+V_{\rm ext}(q,m),
\ee
where $V_{\rm int}(m)$ is the
 microscopic
internal energy, and $V_{\rm ext}(q,m)$ is related to an external influence on the system, as indicated
by its dependence on the exterior parameter $q\in\Rm^n$.
A special case of interest is when $V(q,m)$ is linear in $q$:
\be\label{25feb602}
V_{\rm ext}(q,m)=(q\cdot \overline V(m)).
\ee
Here, $(\cdot)$ denotes the standard inner product on $\Rm^n$, and  $\overline V$ is a map
from $M$ to $\R^n$.

%For some systems (including the Curie-Weiss model which we study below)
%it is possible to reduce the size of the state space. The resulting Hamiltonians
%in principle can depend on $\beta$. We call them {\it effective Hamiltonians.}
% Let's return to the general theory.
For every $T>0$,
and  $q\in\Rm^n$ fixed,
the functional $G(q,\cdot)$ is strictly convex on~$\mathcal{P}$.
A direct computation shows that its
unique critical point, known as the Gibbs distribution, or an equilibrium distribution, is
\begin{equation}\label{eq-var}
{\rho_G:=\hbox{argmin}_{\rho\in{\mathcal P}}G(T,q,\rho)}=\frac{e^{-\beta H(q,m)}}{\mathcal{Z}(q)},~~
\mathcal{Z}(q) = \int_M e^{-\beta H(q,m)} d\mu(m).
%\Phi(q,\rho) \to {\rm \min},\; \rho\in \mathcal{P}\;.
%{\min_{\rho\in{\mathcal P}}\Phi(q,\rho).}
\end{equation}
%Here, $\mathcal{Z}(q) = \int_M e^{-\beta H(q,m)} d\mu(m)$ is
%the normalization constant.
%We will refer to $\rho_G$   or
%an equilibrium of such thermodynamic system:
% To find it, note that
%at an equilibrium we have
%\be\label{nov2002}
%\frac{\partial \Phi}{\partial \rho}(q,\rho) =0,
%\ee
%from which, together with the definitions of $S(\rho)$ and $\Phi(q,\rho)$,  we deduce that
%\[
%\ln \rho +\beta H = \text{const}.
%\]
%This readily yields
%A direct computation shows that the Gibbs distribution
%\[
%\rho_G(q,m)= \frac{e^{-\beta H(q,m)}}{\mathcal{Z}(q)}.
%\]
%
%
%\begin{prop}\label{prop-secondder}
%Assume that the Hamiltonian $H(q,m)$ is linear %\footnote{That is, $\frac{\partial H}{\partial q}$ does not depend on $q$.}
%in $q$ for every $m\in M$. Then
%\begin{equation}\label{eq-phi-form}
%\phi_H(q) = -\beta^{-1}\ln \mathcal{Z}(q)\;,
%\end{equation}
%and
%\begin{equation}\label{eq-phi-der}
%\phi''_H(q) = \beta\Big( \Big(\int \frac{\partial H}{\partial q} \rho_G d \mu \Big)^2 -
%\int\Big(\frac{\partial H}{\partial q}\Big)^2 \rho_G d\mu \Big) \;.
%\end{equation}
%In particular, $\phi_H(q)$ is a smooth concave function.
%\end{prop}
%
%
%\begin{proof} Formulas \eqref{eq-phi-form} and \eqref{eq-phi-der} are obtained
%by a direct calculation that
%uses the linearity of~$H(q,m)$ in $q$. The concavity of $\phi_H(q)$ follows from
%\eqref{eq-phi-der} by the Cauchy-Schwartz inequality.
%\end{proof}
%
Note that the entropy of a macroscopic state $\rho\in\cP$ can be written as
\be\label{25feb1002}
S(\rho)=-\pdr{G}{T}(T,q,\rho).
\ee
Similarly,
for given a macroscopic state~$\rho\in\cP$
the generalized pressures~\cite{Be} are defined as
\begin{equation}
\label{eq-free-en-gf}
p_j{(q,\rho)} = -\frac{\partial G}{\partial q_j}  {(T,q,\rho)}
= -\int_M \frac{\partial H}{\partial q_i} {(q,m)} \rho {(m)} d\mu({m})\;, \; j=1,\dots , n\;.
\end{equation}
Note that $p_i$ do not depend on $T$. The entropy and the generalized pressures are,
in the sense of (\ref{25feb1002})-(\ref{eq-free-en-gf}),
the dual variables to the external physical parameters $T>0$ and $q\in\Rm^n$, respectively.

In the special case of the linear  Hamiltonians as in (\ref{25feb604})-(\ref{25feb602}), the pressures have the form
\[
p_j(\rho)=-\int_M \bar V_j(m) \rho(m)d\mu(m),~~j=1,\dots,n,
\]
and the free energy in (\ref{eq-Phi-rho}) is
\begin{equation}\label{25feb608}
\bal
G(T,q,\rho)&=U(\rho)-TS(\rho)-\sum_{j=1}^n p_j(\rho)q_j.
~~q\in\Rm^n,~\rho\in\cP,
\enbal
\end{equation}
 where $U(\rho)$, called the macroscopic internal energy, is defined by
\be\label{25feb1004}
U(\rho)=\int_M V_{\rm int}(m) \rho(m)d\mu(m).
\ee

\subsection{A geometric interpretation of the equilibria and the extended thermodynamic phase space}
\label{sec:extended}

These basic notions can be represented geometrically as follows.
Given a macroscopic state $\rho\in\cP$, we consider
the point $(-G(T,q,\rho), S(\rho), T, p(q,\rho),q)$ as an element of $\Rm^{2n+3}$ with the coordinates
$z=-G$, $S,T$, and $(p_j,q_j)$, $j=1,\ldots,n$. This space
is called {\it the extended thermodynamic phase space} and is
denoted by $\widehat{\mathcal{T}}$.
We will refer to $T$ and $q=(q_1,\ldots,q_n)$ as the intensive variables, and to~$S$ and $p=(p_1,\ldots,p_n)$ as the
extensive variables.

The space $\widehat{\mathcal{T}}$ is naturally identified with the jet bundle
\[
J^1\R^{n+1}:= \R (z) \times T^*\R^{n+1}\;,
\]
equipped with the canonical $1$-form
\be\label{25feb612}
\widehat{\lambda} = dz-SdT - \sum_{j=1}^n p_j dq_j,
\ee
called the Gibbs 1-form. It is a contact form: the maximal dimension of
an integral submanifold  $\widehat\Lambda$ of the distribution $\text{ker}(\widehat{\lambda})$
equals $n+1$.
 Recall that  $\widehat\Lambda$ is called integral if  it is tangent to the distribution,
that is,  $\widehat{\lambda}$ vanishes on~$T\widehat\Lambda$.
%Note that we treat $S$ as the dual variable to $T$.
The integral submanifolds of the maximal dimension $n+1$ are called {\it Legendrian}.

In our setting, a standard example of a Legendrian submanifold is  the
1-jet of a smooth function $f: \R^{n+1} \to \R$,

\begin{equation}\label{eq-lambdaef}
\widehat\Lambda_{f} = \Big\{ (z,S,T,p,q,z)\in{\J^1\R^{n+1}}\Big|~z= f(T,q),~S=\pdr{f}{T}(T,q),~
p_j = \frac{\partial f}{\partial q_j}(T,q), j=1,\ldots,n\Big\}\;.
\end{equation}
More generally, given a {smooth}
function $$\Psi: \R^{n+1} \times {\mathcal{A}} \to \R\;,$$
where $\mathcal{A}$ is a linear
space of auxiliary variables $\xi$,
the set $\Lambda_\Psi$ of all $(z,S,T,p,q)\in\J^1\R^{n+1}$ such that there
exists $\xi\in \mathcal{A}$ so that
\begin{equation}\label{eq-gf}
\bal
\widehat{\Lambda}_{\Psi} = &\Big\{\hbox{$(z,S,T,p,q):\exists\xi\in \mathcal{A}$ s.t }
z= \Psi(T,q,\xi),~\frac{\partial \Psi}{\partial \xi} (T,q,\xi) = 0,~\\
&S= \frac{\partial \Psi}{\partial T} (T,q,\xi),~
p= \frac{\partial \Psi}{\partial q} (T,q,\xi) \Big\},
\enbal
\end{equation}
%\begin{equation}\label{eq-gf}
%\Lambda_{\Psi} = \left\{ \frac{\partial \Psi}{\partial \xi} (q,\xi) = 0,~
%p= \frac{\partial \Psi}{\partial q} (q,\xi),~ z= \Psi(q,\xi)\right\}
%\end{equation}
is a (possibly singular) Legendrian submanifold for a generic $\Psi$. The function $\Psi$
is called a generating function of $\widehat{\Lambda}_{\Psi}$, and the variables $\xi$ are called
the ghost variables.

A key  observation is that (\ref{eq-var})-(\ref{eq-free-en-gf}) exactly mean that
a Gibbs measure $\rho_G(T,q,\cdot)$
of a thermodynamic system corresponds to a point on the Legendrian submanifold $\Lambda$
of the form~(\ref{eq-gf}) with the elements $\rho\in \cP$ being
the ghost variables $\xi$. The generating
function of~$\Lambda$ is the (minus) free energy $-\Phi$
%\be\label{nov2106}
%\Lambda = \Big\{(z,S,T,p,q) \in \J^1\R^{n+1} \; \Big{|}   \; \exists \rho \in \mathcal{P} \;:\;
%\frac{\partial \Phi}{\partial \rho}(q,\rho) =0, \;
%S= \frac{\partial \Psi}{\partial T} (T,q,\xi),~~ p = - \frac{\partial \Phi}{\partial q}(q,\rho), \; z= -\Phi(q,\rho)\Big\}\;.
%\ee
(we learned this observation from \cite{LimOh}).
In other words, the equilibria of a thermodynamic
system form a Legendrian submanifold $\widehat\Lambda$ in $\widehat{\mathcal T}$, as we vary the external parameters
$T$ and $q$, that themselves are coordinates on $\widehat{\mathcal T}$.

% When the free energy $\Phi(q,\rho)$ is convex in $\rho$, so that it has a unique critical point, the equilibrium Legendrian (\ref{nov2106}) can be written
%in the form of (\ref{eq-lambdaef})
%\begin{equation}\label{nov2108}
%\Lambda = \Big\{  (p,q,z)\in{\J^1\R^n}\Big|~z= f(q),~ p = \frac{\partial f}{\partial q}(q)\Big\}\;,
%\end{equation}
%with $f(q)=-\Phi(q,\rho_G(q,\cdot)$.

%Note that the fact that the equilibrium set is Legendrian, i.e. satisfies
%$dz-pdq=0$, can be considered  as a manifestation of the Gibbs relation,
%which combines the first and the second laws of thermodynamics.
%
%An important role in the considerations below will be played by the free energy of the Gibbs
%distribution
%\begin{equation}\label{eq-phiH}
%\phi_H(q) = \Phi(q,\rho_G(q,\cdot)).
%\end{equation}
%
%

\subsection{The reduced thermodynamic phase space} \label{sec:reduced}

The aforementioned Legendrian submanifold $\widehat\Lambda$
includes all possible
equilibria, for all values of $T>0$ and $q\in\Rm^n$. As the physical parameters change,
one can, in principle, think of time-dependent thermodynamic non-equilibrium processes
that start and end at two corresponding equilibria as paths connecting
two points on $\widehat\Lambda$. This, however, is somewhat unwieldy, and it is convenient
to work in  the {\it reduced} thermodynamic phase space.
With an eye toward  applications, we present a rather  general reduction procedure.
Fix~$1 < k \leq \ell \leq n$ and set
\[
I= \{k+1,\dots, \ell\},~~E = \{\ell+1,\dots, n\}.
\]
Consider the
affine subspace $h \subset J^1\R^{n+1}$  obtained
by fixing the values of the temperature and the intensive variables with indices from $I$, and
setting the extensive variables with indices from $E$ to be zero:
\be\label{25feb614}
h = \{T=T^0;  q_i = q_i^0, \; i \in I;  \; p_e = 0,\; e \in E\}\;.
\ee
This is an $n+2+k$ dimensional sub-space.
The  reduced thermodynamic phase
space $\mathcal{T}$ is obtained by the projection of $h$ along the directions
corresponding to the variables $S$, $p_i, i \in I$ and $q_e,  e \in E$,
and is $2k+1$ dimensional.

\begin{rem}{\rm
Note that our notions of extended and reduced thermodynamic phase spaces and of reduction differ from the notions with the same names appearing in some other papers on geometric approaches to thermodynamics - see e.g. \cite{vdSchaft}.}
\end{rem}

The reduced thermodynamic phase space is naturally identified with $J^1\R^k$.
Consider now the projection $J^1\R^{n+1} \to J^1\R^k$,
\[
(z,S,T,p_1,q_1,\dots,p_n,q_n) \mapsto (z,p_1,q_1,\dots,p_k,q_k)\;,
\]
and let
\be\label{25feb1102}
\lambda=dz-\sum_{j=1}^kp_jdq_j,
\ee
be the standard contact structure on $J^1\R^k$.
Given an equilibrium Legendrian submanifold~$\widehat{\Lambda} \subset \widehat{\mathcal{T}}$,
its reduction
$\Lambda$ is defined as the image of $\widehat{\Lambda} \cap h$ under the above-mentioned projection.
It follows from (\ref{25feb612}) and the definition (\ref{25feb614}) of $h$ that, under certain transversality assumptions,
%for generic values of $T^0$ and $q_j^0, j = k+1, \dots, n$,
the set $\Lambda$ is also a Legendrian submanifold of $J^1\R^k$ with
respect to the standard contact structure. Physically, $\Lambda$ represents the collection
of all equilibria of the system with the temperature $T$ and the
intensive variables $q_i$, $i\in I$ fixed and $p_e=0$, $e\in E$.

Enlarging the original system by adding unspecified thermodynamic variables (for example, considering a system of pair-wise interacting Curie-Weiss magnets
discussed in Example~\ref{exam-magnetic}  below)
and reducing them at some fixed values will lead to more complicated families of Legendrians which could be considered
as perturbations of $\Lambda$.
In fact, one can get  in this way every Legendrian which is
obtained from the zero section by a compactly supported Hamiltonian isotopy.
This fact and its extension to jet bundles of manifolds follows from the theory of generating functions, (see e.g. \cite{CFP} for a detailed discussion). It would be interesting to elaborate meaningful examples of such perturbations.

Passing to the next level of abstraction, we can assume that the space of intensive variables is a smooth manifold $X$ and
the thermodynamic phase space (either extended, or reduced) is $J^1X= \R(z) \times T^*X(p,q)$ equipped with the contact form $\lambda=dz-pdq$.
The submanifolds~$\Lambda$ consisting of all equilibria of such system
are Legendrian: $\dim \Lambda = \dim X$ and $\lambda|_{T\Lambda} = 0$.

%Admissible
%thermodynamic paths are given by non-negative curves $\gamma(t)$, i.e.,
%\be\label{25jan3002}
%\lambda(\dot{\gamma}(t)) \geq 0\;.
%\ee
%The submanifolds $\Lambda$ consisting of all equilibria of such system
%are Legendrian: $\dim \Lambda = \dim X$ and $\lambda|_{T\Lambda} = 0$. \blue{LR: how do we know the
%dimension of the set of equilibria in this abstract setting?}

\begin{rem}\label{rem-infty}
{\rm In the context of thermodynamics, there is a  discrepancy between mathematical
theory which requires a reasonable behavior ``at infinity" of the Legendrian submanifolds
and the pool of physical examples. This resembles the situation in classical mechanics where
the best theory exists for the compactly supported case, while physically meaningful Hamiltonians are not compactly supported.
Nevertheless, sometimes one can close this gap by topological tricks,
see e.g. the proof of Theorem \ref{thm-main} below for a reduction of a physical problem to $J^1S^1$.
}
\end{rem}

%$\J^1\R^n:=T^*\R^n \times \R$ equipped with the coordinates $(p,q,z)$. Here,~$q \in \R^n$ is an external parameter, $p \in (\R^n)^*$ is the corresponding generalized pressure, and $z \in \R$ is the free energy taken with the opposite sign.
%The space~$\J^1\R^n$ is equipped with the contact 1-form
%\[
%\lambda = dz - \sum_{i=1}^n p_idq_i\;,
%\]
%called the Gibbs form in the context of thermodynamics.
%Recall that being contact means that $\lambda \wedge (d\lambda)^n$ is a volume form.
%
%Given a macroscopic state $\rho\in\cP$ we will  consider
%the point $(p(q,\rho),q,-\Phi(q,\rho))$ as an element of
%$\J^1\R^n:=T^*\R^n \times \R$ equipped with the coordinates $(p,q,z)$. Here,~$q \in \R^n$ is an external parameter, $p \in (\R^n)^*$ is the corresponding generalized pressure, and $z \in \R$ is the free energy taken with the opposite sign.
%The space~$\J^1\R^n$ is equipped with the contact 1-form
%\[
%\lambda = dz - \sum_{i=1}^n p_idq_i\;,
%\]
%called the Gibbs form in the context of thermodynamics.
%Recall that being contact means that $\lambda \wedge (d\lambda)^n$ is a volume form.
%

\section{Positivity of the thermodynamic paths}\label{sec:positive}

In this section, we will see that the laws of thermodynamics
imply that the paths  that correspond to physical
processes must be non-negative with respect to the corresponding contact forms both
in the extended and reduced thermodynamic phase spaces, a minor modification of a result
of~\cite{Haslach}.

\subsection{The thermodynamic paths in the extended thermodynamic phase space}

We will now consider, for the sake of convenience, thermodynamic systems with
affine Hamiltonians of the form~(\ref{25feb604})-(\ref{25feb602})
so that the free energy has the form (\ref{25feb608}).
So far, we have described the collections of thermodynamic equilibria as
points on the Legendrian submanifolds, either in the full thermodynamic phase space
or in its reduced version.  %Each of these points represents an equilibrium measure $\rho_G\in\cP$ that is uniquely determined by~$T>0$ and $q\in\Rm^n$.
A {\it thermodynamic path} is a smooth
 path
\be\label{25feb1010}
\gamma(t)=(-G(T(t),q(t),\rho(t)),S(\rho(t)),T(t),p(q(t),\rho(t)),q(t)) \subset \widehat{\mathcal{T}},
\ee
in the phase space
generated by a smooth time-dependent process, with $T(t)>0$, $q(t)\in\Rm^n$ and~$\rho(t)\in{\cP}$,
that may be out-of-the-equilibrium.
As we have already mentioned in Remark~\ref{rem-temperature-disclaimer},
we always
assume that the system exchanges heat with a thermostat (or reservoir) which has a well defined temperature, and  the coordinate~$T(t)$ of the path~$\gamma$ is the
temperature of the reservoir
at the time $t$.

%In an equilibrium, the temperature of the system is equal to the temperature of the thermostat (see \cite{Str}, formula (44)). In physical terms, we tacitly
%assume that on the ``boundary of the system"  the temperature equals to that of the thermostat, while ``inside" the temperature is defined only in an equilibrium.
%In this paper, we mostly consider the temperature as an external
%parameter and ignore its relaxation to the equilibrium.

A reformulation of an important observation of~\cite{Haslach} is that the laws of thermodynamics
impose a constraint on the paths in the thermodynamic phase that can be physically realized.
To this end, let us recall the following

%Consider a thermodynamic system  with the internal energy $U$, the entropy $S$,
%the temperature $T$, and a number of pair-wise conjugate extensive and intensive variables $p_i,q_i$, $i= 1,...,n$. The Gibbs free energy is given by
%\be\label{25jan3006}
%G= U-TS - \sum_i p_iq_i\;.
%\ee
%The $2n+3$-dimensional space with the coordinates $z=-G$, $S,T$, and $(p_i,q_i)$ is called {\it the extended thermodynamic phase space}. It is
%denoted by $\widehat{\mathcal{T}}$ and is naturally identified with the jet bundle $$J^1\R^{n+1}:= \R (z) \times T^*\R^{n+1}\;.$$
%The space $\widehat{\mathcal{T}}$ is equipped with the canonical $1$-form
%\be\label{25jan2802}
%\widehat{\lambda} = -dG-SdT - \sum p_i dq_i=-dU+TdS+\sum q_idp_i
%\;,
%\ee
%called the Gibbs 1-form. It is a contact form: the maximal dimension of an integral submanifold of the distribution $\text{ker}(\lambda)$ equals $n+1$. The integral submanifolds of the maximal dimension are called {\it Legendrian}.

\begin{defin}\label{def-positive-curve} {\rm A
 path
 $\gamma(t) \subset J^1\R^{n+1}$ is
called {\it non-negative} if $\widehat{\lambda}(\dot{\gamma}(t)) \geq 0$ for every $t$.
%It is called
%{\it isotropic} if  $\widehat{\lambda}(\dot{\gamma}(t)) = 0$ for all $t$.
}
\end{defin}

Let us emphasize that the definition does not depend on the choice of the specific contact
form $\widehat{\lambda}$ but only on the contact distribution and its co-orientation.
%
%A point in the extended thermodynamic phase space represents a collection of macroscopic parameters of the system.
%
%
%The laws of thermodynamics impose a constraint on the curves $\gamma(t)$
%in the extended thermodynamic phase space that
%correspond to physical processes.
To derive the aforementioned
constraint, let $(T(t),q(t)),\rho(t))$ be a smooth path and
$\dot\gamma(t)$ and $\dot\rho(t)$ be, respectively, the tangent vectors to the corresponding thermodynamic path $\gamma(t)$ in (\ref{25feb1010})
and to $\rho(t)\in\cP$.
By the first law of thermodynamics, the infinitesimal
increment of the
 macroscopic
internal energy defined in (\ref{25feb1004})
can be written as the difference
\begin{equation}\label{eq-1law}
dU(\dot\rho) = \Delta Q - \Delta W(\dot\gamma)\;.
\end{equation}
Here, $\Delta Q$ is the infinitesimal amount of heat supplied to the system, and
\be\label{25feb1012}
\Delta W(\dot\gamma) = -\sum_{j=1}^n q_j dp_j(\dot\gamma)
\ee
is the work done by the system (see \cite{Ja}).  In fact Jarzynski in \cite{Ja}
distinguishes between two types of work, inclusive and exclusive, and the one given in \eqref{25feb1012} is the exclusive one.
In addition,
by the second law of thermodynamics, the increment of the entropy is
\begin{equation}\label{eq-2law}
dS(\dot\gamma) = \frac{\Delta Q}{T} +d_{irr} S(\dot\gamma).
\end{equation}
Here, $d_{irr} S(\dot\gamma)$ is the {\it irreversible entropy production}, see \cite{De}, equation (3.8).
The second law of thermodynamics states that $d_{irr} S(\dot\gamma) \ge 0$ for all $t>0$.
If $d_{irr} S(\dot\gamma) = 0$  for all $t$, the process is reversible. Otherwise, it is called irreversible.
%$d_iS(\xi) > 0$ at some $t>0$.
Combining~(\ref{eq-1law}),~(\ref{25feb1012}) and (\ref{eq-2law}), together with the expression (\ref{25feb608}) for
the free energy and the definition (\ref{25feb612}) of $\widehat\lambda$, %the 1st and the 2nd laws
we get
\[
Td_{irr} S(\dot\gamma) = -dU(\dot\rho) +TdS(\dot\gamma)+ \sum_{j=1}^n q_j dp_j (\dot\gamma) = \widehat{\lambda}(\dot\gamma)\;.
\]
We conclude from the second law of thermodynamics that %for an irreversible process we have
\be\label{25jan2806}
\hbox{thermodynamic paths in $\widehat{\mathcal{T}}$  are given by non-negative paths $\gamma$},
\ee
an observation that can be found in a slightly different form in~\cite{Haslach}. Note that for a reversible process we always have
$\widehat{\lambda}(\dot\gamma)=0$.
This   is closely related to the fact that the set of equilibria of a system forms a Legendrian submanifold.

\subsection{The thermodynamic paths in the reduced thermodynamic phase space}\label{sec:reduced-1}

%We shall mostly work in the {\it reduced} thermodynamic phase space.
%With an eye toward potential applications, we present a rather general reduction procedure.
%Pick $1 < k \leq l \leq n$ and set $I= \{k+1,\dots, l\}$, $E = \{l+1,\dots, n\}$.
%Consider the
%affine subspace $h \subset J^1\R^{n+1} = \R^{2n+3}$  obtained
%by fixing the values of the temperature and the intensive variables with indices from $I$, and by putting the extensive variables with indices from $E$ to be zero:
%\be\label{25jan2804}
%h = \{T=T^0;  q_i = q_i^0, \; i \in I;  \; p_e = 0,\; e \in E\}\;.
%\ee
%This is an $n+2+k$ dimensional sub-space.
%The  reduced thermodynamic phase
%space $\mathcal{T}$ is obtained by the projection of $h$ along the directions
%corresponding to the variables $S$, $p_i, i \in I$ and $q_e,  e \in E$,
%and is $2k+1$ dimensional.
%
%The reduced thermodynamic phase space is naturally identified with $J^1\R^k$.
%Given an equilibrium Legendrian submanifold $\widehat{\Lambda} \subset \widehat{\mathcal{T}}$, its reduction
%$\Lambda$ is defined as the image of $\widehat{\Lambda} \cap h$ under the above-mentioned projection.
%It follows from (\ref{25jan2802}) and the definition (\ref{25jan2804}) of $h$ that, under certain transversality assumptions,
%%for generic values of $T^0$ and $q_j^0, j = k+1, \dots, n$,
%the set $\Lambda$ is a Legendrian submanifold of $J^1\R^k$ with respect to the standard contact structure.
%
We now discuss the non-negativity of the thermodynamic paths in the reduced thermodynamic phase space.  Suppose that the reduction is made with respect to the intensive variables
with the indices from $I$ and the extensive variables with the indices from $E$, where
\[
I= \{k+1,\dots, \ell\},~~E = \{\ell+1,\dots, n\}.
\]

Consider   the projection $J^1\R^{n+1} \to J^1\R^k$,
\[
(z,S,T,p_1,q_1,\dots,p_n,q_n) \mapsto (z,p_1,q_1,\dots,p_k,q_k)\;,
\]
and let $\lambda$ be the standard contact structure on $J^1\R^k$, as defined in (\ref{25feb1102}).
Take any thermodynamic path
\[
\widehat{\gamma}(t) = (z(t),S(t),T(t),p(t),q(t))\in\widehat{\cal T}.
\]

\begin{assum} \label{assum-red} While considering  thermodynamic processes given by a path in the reduced thermodynamic phase space, we assume that
\begin{equation} \label{eq-Aincl}
A:= S(t)\dot T(t)+\sum_{j \in I} p_j(t)\dot q_j(t)\ge 0\;.
\end{equation}
Furthermore, we assume that $p_e=0$, $e \in E$.
\end{assum}

Let us illustrate inequality \eqref{eq-Aincl} in some situations.
If the temperature $T$ is the only reduced variable and the process is isothermal,
then $A=0$ and
Assumption~3.2 is automatically satisfied.

If $T$ is increasing in the course of the process, we shall assume that the entropy
is non-negative. Recall that, absent a specific normalization (such as that provided by the third law), entropy is defined only up to an additive constant, since only entropy differences have physical meaning. Accordingly, we may replace $S$ by a modified entropy $S+S_0$, where
$S_0$ is a constant chosen so as to ensure positivity. To keep the description consistent, we simultaneously modify the free energy by $G \mapsto G - S_0 T $. Geometrically, the replacement $S \mapsto S+S_0$, $G \mapsto G - S_0 T$, induces a contactomorphism of the extended thermodynamic phase space which preserves the canonical form \eqref{25feb612}. Thus, this change of normalization
does not affect the notion of positive thermodynamic paths or the validity of the second law inequalities discussed in this section.

For example, suppose that $\rho$ is a probability density on a finite set of $N$ points
and that $\mu$ is the uniform probability measure on this set. Then the probability of
point $i$ equals $p_i=\rho_i/N$, and hence
\[
S(\rho) = - \sum_i p_i \log p_i - \log N .
\]
It follows that choosing $S_0=\log N$ makes the modified entropy $S+S_0$ non-negative.

Back to the discussion on Assumption \ref{assum-red}, look at 
the ideal gas example
considered in Section~\ref{sec:ideal-gas} below. Here \color{black}
the volume $V$ and the negative pressure~$-P$ are pair-wise conjugate extensive and intensive variables, respectively, and the volume is, of course, always non-negative.
Thus, if we reduce the pressure only, the assumption above reads that the pressure is non-increasing.
On the other hand, in the Curie-Weiss model discussed in Example~\ref{exam-magnetic}
and Section~\ref{subsec-mag} below, the magnetization $M$ (dual to the external
magnetic field $H$ that is an intensive variable) is an extensive variable that
is not necessarily positive. In that context, when the external magnetic field is reduced,
assumption (\ref{eq-Aincl}) does not directly translate into its monotonicity in time.

A more general
interpretation of \eqref{eq-Aincl} comes from the fact that in an equilibrium it can be
written as
\begin{equation}
\label{eq-Aincl-1}
A = -\frac{\partial G}{\partial T} \dot{T} - \sum_{j \in I}\frac{\partial G}{\partial q_j}\dot{q_j}\;.
\end{equation}
Thus, the first term in the right side of (\ref{eq-Aincl})
%$\displaystyle \frac{\partial G}{\partial T} \dot{T}$
is the rate of change of the free energy due to
the change in the temperature, while the second term
%$\sum_{j \in I} p_j(t)\dot q_j(t)$
is, in the terminology of \cite{Ja}, the inclusive work done
by the system. In other words $A$ is {\it the decrement of the free energy due to the change of
the reduced intensive variables from} $I$. We shall tacitly assume that the same interpretation
holds for non-equilibrium processes which are close to equilibrium ones (e.g., for quasi-static processes considered in Section \ref{sec:classify} below).  For such processes
the first part of Assumption \ref{assum-red} can be restated as follows:
\be
\bal
&\hbox{Throughout the process, the reduced variables contribute either a decrease}
\\
&\hbox{or no change to the free energy.}
\enbal
\ee

Let
\[
\gamma(t)=(z(t),p_1(t),q_1(t),\dots,p_k(t),q_k(t)),
\]
be the projection of $\widehat\gamma(t)$ on the reduced thermodynamic phase space.
% \black{LR: should we mention it somewhere in the previous section and what is special about $0$? \green{This should be discussed in the introduction
%where we set up everything with probability measures}}.
Note that by (\ref{25jan2806}), the path $\widehat\gamma$ is non-negative in
the extended thermodynamic phase space~$\widehat{\cal T}$.
As $p_e=0$ for $e\in E$, we have
\be
\bal
0\le \hat\lambda\Big(\farc{d\widehat\gamma}{dt}(t)\Big)&=\dot z(t)-S(t)\dot T(t)-
\sum_{j=1}^np_j(t)\dot q_j(t)\\
&=\dot z(t)-S(t)\dot T(t)-\sum_{j=1}^kp_j(t)\dot q_j(t)
-\sum_{j=k+1}^lp_j(t)\dot q_j(t).
\enbal
\ee
We deduce from the aforementioned assumptions on the path $\widehat\gamma(t)$ that
\be\label{25feb1110}
\lambda(\dot\gamma(t))=\dot z(t)-\sum_{j=1}^kp_j(t)\dot q_j(t)\ge S(t)\dot T(t)+\sum_{j=k+1}^lp_j(t)\dot q_j(t)\ge 0.
\ee
%by the above assumptions on the positivity of  $p_i(t)$ for $i\in I$,
%and the monotonicity of $T(t)$ and $q_i(t)$ for $i\in I$.
Thus, the projection $\gamma(t)$ of $\widehat{\gamma}(t)$ is again non-negative
with respect to the standard contact structure $\lambda$.
We call $\gamma$ {\it a reduced thermodynamic path.}

In the abstract setting of a thermodynamic system where the space of the intensive variables is a manifold, as discussed above Remark~\ref{rem-infty}, admissible
thermodynamic paths are given by non-negative paths $\gamma(t)$, i.e.,
\be\label{25jan3002}
\lambda(\dot{\gamma}(t)) \geq 0\;.
\ee

\begin{exam}\label{exam-magnetic} (The Curie-Weiss magnet)
{\rm
Let $n=k=1$, $p_1 = M$ and $q_1=H$ be the magnetization and the external magnetic field
of a mean-field Ising model (a.k.a. the Curie-Weiss magnet) in the thermodynamic limit.
Here, the temperature $T$ and
the external magnetic field $H$ are the intensive variables, and the entropy $S$ and the magnetization $M$ are the extensive variables.  We assume that the system is subject to the background magnetic field
 $H_{back}$ playing the role of a parameter (see Remark \ref{rem-back} below). The total magnetic field acting on the magnet is then $H+H_{back}$. The equilibrium Legendrian $\widehat{\Lambda}$ in $J^1\R^2$ is given by the equations
(see Section~2 in~\cite{GLP})
\be\label{25jan3102}
\bal
&z=T \ln\left(2\cosh\frac{H+H_{back}+bM}{T}\right) - \frac{b}{2}M^2,\; M = \tanh \left(\frac{H+H_{back}+bM}{T}\right)\;,\\
&S = - \frac{1-M}{2}\ln\frac{1-M}{2} - \frac{1+M}{2}\ln\frac{1+M}{2}\;.
\enbal
\ee
Here, $b >0$ is the spin interaction parameter.  The reduced Legendrian $\Lambda \subset J^1\R$ (where we only reduce the temperature and the entropy)
is given by the first two equations and depends on $T$ and $H_{back}$ as parameters.
We sometimes emphasize this by writing $\Lambda=\Lambda(T,H_{back})$.
Given a thermodynamic temperature-non-decreasing process in $\widehat{\mathcal{T}}=J^1\R^2,$
satisfying (\ref{25jan2806}),
its reduction is a non-negative path in~$\mathcal{T}=J^1\Rm$.}
\end{exam}

\section{Classification of thermodynamic processes}\label{sec:classify}

For our discussion on non-equilibrium thermodynamics, we have to deal with
systems depending on an external parameter %$\alpha$
from  a subset of a Euclidean space.
%In particular, the Gibbs distribution $\rho_{G,\alpha}$,
%and the free energy $G(\cdot, \alpha)$ depend on $\alpha$.

For instance, we always reduce the entropy and the temperature,
considering the latter to be a parameter. %One can think that the temperature $T$ is just the first coordinate of~$\alpha$.
As far as the other external parameters $\alpha$ are concerned,  we assume that the Hamiltonian
% $H$
depends on $\alpha$ in the following way:
%$$H(q,m,\alpha)= V_{\rm int}(m,\alpha)+ q \cdot \overline{V}(m)\;,$$
$$(q,m,\alpha)\mapsto V'_{\rm int}(m,\alpha)+ q \cdot \overline{V}(m)\;,$$
i.e., that only the internal energy $V'_{\rm int}$ depends on $\alpha$.

Here is an important example. It is convenient to introduce an auxiliary external parameter to describe a jump of an extensive variable.
Take a Hamiltonian $H$ of the form (\ref{25feb604})-(\ref{25feb602}) and write a new Hamiltonian, depending on $\alpha$, as
$$H(q+\alpha,m)= V'_{\rm int}(m,\alpha) + q\cdot\overline{V}(m) = (V_{\rm int}(m)+\alpha \cdot\overline{V} (m))+ q\cdot\overline{V}(m),$$
where $V_{\rm int}(m)$ is the internal energy term in the original $H$ and $V'_{\rm int}(m,\alpha) := V_{\rm int}(m)+\alpha \cdot\overline{V} (m)$ is the new internal energy depending on $\alpha$.
With this notation, the jump $q \to q+c$ can be described as the jump of $\alpha$ from $0$ to
$c$. We refer to Remark \ref{rem-back} for further discussion.

Sometimes, we consider genuine external parameters, such as the exchange parameter of
the Curie-Weiss magnet, see Section \ref{subsec-demag}  below.

We introduce several types of non-equilibrium processes. They differ by their duration - slow (quasi-static), fast, and ultrafast
(instantaneous), and by their ``spread" in the thermodynamic phase space - local and global.

\medskip\noindent
{\sc I. Slow (quasi-static)
processes.} They are caused by a slow change of some parameters of the system.
The change is so slow that, at each moment in time, the system is in equilibrium.
We adopt Assumption \ref{assum-red},
meaning that the decrement of the free energy due to the change of the reduced intensive variables is non-negative. In particular, when the temperature and the entropy are the only reduced variables, we assume that the temperature is non-decreasing \color{black} and, if the temperature is not constant, that the entropy is gauged by an additive constant to be positive. \color{black}  Thus, the slow processes are given by {\it non-negative} families (or paths) of Legendrian (equilibrium) submanifolds. Mathematically, this means that we have an initial Legendrian submanifold $\Lambda \subset \mathcal{T}$ and a map
$$\Gamma: \Lambda \times [0,\tau] \to \mathcal{T},\; \Gamma (x,0) = x \;\;\forall x \in \Lambda,$$
such that for each $t$, the image of $\Gamma(\cdot, t)$ is a Legendrian submanifold
 -- such a map $\Gamma$
is called a (parameterized) Legendrian isotopy, see Section~\ref{sec:slow} below.
In the geometric language, this means that the map $\Gamma$ describing this process is defined on the direct product of the whole equilibrium Legendrian submanifold $\Lambda$ -- and not only of its part -- with the time interval. We assume that each individual path $t\mapsto \Gamma(x,t)$ (for a fixed $x$) is non-negative,
that is, a thermodynamic path  satisfying (\ref{25feb1110}).
Such a Legendrian isotopy is called non-negative, see Section~\ref{sec:slow} below.
A thermodynamic path $t\mapsto \Gamma(x,t)$ appearing in a slow (quasi-static) process $\Gamma$ will be called a {\it slow thermodynamic path}.

%Physically, this means that the change is so slow that, at each moment in time, the system is in equilibrium.

We distinguish between two kinds of slow processes. We call a slow process {\it global} if the isotopy $\Gamma$ is a proper embedding (i.e., preimage of every compact subset is compact). \footnote{
As we already stated in the introduction,
by a global process we mean a process
defined at all macroscopic equilibrium states of the system.} An example is provided by the Curie-Weiss magnet whose temperature is slowly increasing, yielding a path of submanifolds  $\Lambda(T,0)$  from Example~\ref{exam-magnetic}. Otherwise we call a slow process {\it local}.

\medskip

Next, we pass to a discussion of fast and ultrafast (instantaneous) processes.
They are triggered by an abrupt change of the external parameters of the system,
say $\alpha_0 \to \alpha_1$. Assume that the initial system was in the equlibrium
with the Gibbs distribution $\rho_{G,\alpha_0}$. The corresponding point in the thermodynamic phase space is $(z^{(0)},p^{(0)},q)$ with $z^{(0)} = -G(q,\rho_{G,\alpha_0},\alpha_0)$ and
\begin{equation} \label{eq-pvsp0}
p^{(0)}= -\int_M \overline{V}(m) \rho_{G,\alpha_0} d\mu\;.
\end{equation}
 What happens to the system
next can be described as follows.

\medskip
\noindent
{\sc Two stage scenario:}

\medskip
\noindent
{\bf Stage 1: Jump.} The macroscopic parameters of the system after the jump
are
$(z^{(1)},p^{(1)},q)$
 with $z^{(1)} = -G(q,\rho_{G,\alpha_0},\alpha_1)$ and
\begin{equation} \label{eq-pvsp1}
p^{(1)}= -\int_M \overline{V}(m) \rho_{G,\alpha_0} d\mu = p^{(0)},
 \end{equation}
cf. \eqref{eq-pvsp0}. It follows that only the free energy changes.
Note that the system remains in the macroscopic state $\rho_{G,\alpha_0}$,
while the microscopic internal energy instantaneously changes from $V_{\rm int}(m,\alpha_0)$ to $V_{\rm int}(m,\alpha_1)$.

\medskip
\noindent
{\bf Stage 2: The Fokker-Planck evolution.} It belongs to the class of fast processes which we are going to discuss it right now.

\medskip\noindent
{\sc II. Fast processes:}  Here we consider a fast\footnote{Here and below 'fast'
means faster than quasi-static and slower than instantaneous. The convergence to the equilibrium is exponential as governed by the first positive eigenvalue of the Fokker-Planck
operator.} evolution of an individual macroscopic state.
(In physical terms, one can think of this evolution as a relaxation of a thermodynamic system in a particular macroscopic state after an abrupt change in the extensive variables -- see e.g. \cite{EPR} for a discussion of such a relaxation of
a Curie-Weiss magnet after a sudden change of its temperature and the exterior magnetic field).
The evolution is  given by a thermodynamic path $\gamma(t) = (z(t), p(t),q)$,  in the reduced
thermodynamic phase space, generated by an evolution of the temperature $T(t)\ge 0$ and of the probability density $\rho(t)\in\cP$.  We consider the temperature~$T(t)$ as a prescribed  non-decreasing  function, with given $T_0=T(0)$ and $T_\infty=\lim_{t\to+\infty}T(t)$.
The intensive variables $q$ remain fixed.
The evolution of $\rho(t)$ is
described by the Fokker-Plank equation
\be\label{25feb2302}
\dot{\rho} = -\nabla_\rho G,
\ee
that involves $T(t)$ as a coefficient, see \cite{EPR}, where such models are elaborated for time-independent temperature.
The corresponding path  $\gamma (t) = (z(t),p(t),q)$, with  $z= - G(T,q,\rho)$,
converges to the equilibrium submanifold corresponding to  the terminal temperature~$T_\infty$ of the process.

We claim that $\gamma(t)$ is necessarily non-negative.
Indeed, recalling \eqref{25feb1002} and since $q$ is kept fixed, we get
\be\label{25feb2304}
\lambda(\dot{\gamma})= \dot{z}= d_\rho G(\nabla_\rho G) +S\dot{T} \geq 0\;,
\ee
and the claim follows.

One can slightly generalize the notion of the fast process by allowing not only the temperature,
but also the external parameter $\alpha(t)$ to move with time.

Let us also emphasize that here we do not assume that at time moment $0$ the system
is in the equilibrium. For instance, at Stage 2 above the evolution starts after the jump, and
hence in general out of equilibrium.  Another example is given by the thermodynamic path corresponding to the transition from the metastable to the stable equilibrium of the Curie-Weiss magnet, see \cite{EPR} for a discussion of metastability and further references.

\medskip\noindent
{\sc III. Ultrafast (instantaneous) processes:} In the thermodynamic limit of certain mean field systems the two stage scenario above may end already at Stage 1. This happens when the limiting
and the terminal Gibbs distributions coincide, so the fast process forming Stage 2  is void: the whole process consists of a jump only.  Assume furthermore that these Gibbs distributions
are given by a delta function on the space of microstates:
\begin{equation} \label{eq-Gibbs-eq}
\rho_{G,\alpha_0} = \rho_{G,\alpha_1} = \delta(m-m_*)\;.
\end{equation}
The latter assumption can be verified for the Curie-Weiss magnet, and for the ideal gas.
It holds true for a more general class of models, essentially due to a version of the large
deviation principle. This will be discussed in a forthcoming paper.

By \eqref{eq-pvsp0} and  \eqref{eq-pvsp1} these assumptions would yield
$p:= p^{(0)}=p^{(1)}= \overline{V}(m^*)$.
Note now that the points $(p,q,z^{(0)})$ and  $(p,q,z^{(1)})$ are endpoints of the Reeb chord
(i.e., a segment parallel to the $z$-axes) connecting the equilibrium Legendrians.

\begin{rem}\label{rem-two-kinds} {\rm The free energy  can change either way in the course of an ultrafast
process. If it decreases,  i.e., $z_1 > z_0$,  there exists a local slow process consisting of germs of Legendrians ``sliding" along the chord. This happens, for instance, with the isochoric piece of the Stirling engine.  It can be realized by a slow local isochoric process in the quasitatic
limit (i.e., when the duration of the cycle tends to infinity), cf. \cite{Stirling-Nature}.
}
\end{rem}

\begin{rem}\label{rem-stab}
{\rm  As explained in \cite{EPR}, if the endpoint of the Reeb chord corresponds to an unstable or metastable equilibrium of the terminal system, there will be further relaxation to the stable equilibrium. We do not address the issue of the stability of equilibria in the context of Reeb chords, and it would be interesting to understand if contact topology can be helpful here.}
%it is unlikely that contact topology can provide a general recipe for addressing this. Stability should be checked in each specific case.
%Thus, it would be more precise to say that Reeb chords represent an ultrafast part of a relaxation process.
\end{rem}

\begin{rem}\label{rem-chord}
{\rm The Reeb chords represent instant jumps from its initial point to the terminal point.
In contrast to the slow and fast processes, the intermediate points of the chord seem to have no physical
meaning.}
\end{rem}

\begin{rem}\label{rem-vsyako}{\rm While ultrafast processes correspond to Reeb chords,
the reverse is not true: Reeb chords can correspond to fast processes and may also appear as paths of individual states in slow
global
processes.
This is illustrated in Example~\ref{example-reeb-gas} below.}
\end{rem}

Let us compare our description of thermodynamic processes with some earlier literature.
Some authors propose to model non-equilibrium processes by contact Hamiltonian flows,
see for instance \cite{BLN,Goto-JMP2015,EP,Goto,Goto24,GLP}. This approach proved successful in several interesting cases, illustrating in particular a thesis from Prigogine’s 1977 Nobel lecture \cite{Prigogine}—that thermodynamic potentials act as Lyapunov functions near equilibrium. At the same time it has limitations in the presence of phase transitions \cite{EP}. In particular, one can prove non-existence (even locally!) of {\it time-independent} contact Hamiltonian that models
some natural relaxation processes \cite{GLP}.
Interestingly enough, positive Legendrian isotopies, proposed as models
of slow thermodynamic processes in the present paper can be realized non-uniquely
by (in general) {\it non-autonomous} ambient contact Hamiltonian flows. This follows from the isotopy extension theorem in
contact topology \cite{Geiges}.
Highlighting the relevance of positive Legendrian isotopies, and in particular
the link with the partial order on the space of Legendrians, is one of the main findings
of the present paper.
For two different contact viewpoints at the Fokker-Planck evolution (cf. the second stage of the scenario above) we refer to \cite{EPR} and \cite{Goto24}. Let us mention also that the GENERIC formalism \cite{GrO,Grmela} provides a simultaneous description of reversible and irreversible thermodynamic processes, including the Fokker-Planck evolution.

The interpretation of the ultrafast processes as Reeb chords between the initial and the terminal Lagrangians appeared first in \cite{EPR} and is further developed in the present paper. One can analyze it through the lens of the correspondence between the quantum mechanics and thermodynamics (we thank N.Nekrasov for this suggestion). Recall that this correspondence
is given by the passage to imaginary time $-i\hbar/T$ which is proportional to the inverse temperature, see e.g. \cite[Section 10-2]{Feynmann-Hibbs} and \cite[formula(2.285)]{Sakurai}. Assume for simplicity that the Lagrangian projections $L_0$ and $L_1$
of the initial and the terminal Legendrians $\Lambda_0$ and $\Lambda_1$, respectively, are graphical of the form
$$L_j = \Big\{p = \frac{\partial f_j}{\partial q}\Big\}\;, \;j=0,1\;, q \in \R^n,$$
and that they intersect transversely in a finite number of points.
On the quantum side $L_j$ defines the space of Lagrangian (or {\it WKB}) states
of the form $a_j(q)e^{i\frac{f_j(q)}{\hbar}}\;,$
with $\int |a_j|^2 dq = 1$, see e.g. \cite{Bates-Weinstein}. The complex-valued functions $a_j$ are assumed to be smooth and compactly supported.
We will denote such a state $\xi_j (a_j,\hbar)$.
The transition probability between $\xi_0 (a_0,\hbar)$ and $\xi_1 (a_1,\hbar)$
is given by
$$p(a_0,a_1, \hbar) := |\langle \xi_0 (a_0,\hbar), \xi_1 (a_1,\hbar)\rangle|^2\;.$$
Here $\langle \cdot,\cdot\rangle$ is the $L_2$-scalar product with respect to the Euclidean
volume. The stationary phase formula (see e.g. \cite[Section 15.6.2]{Guillemin-Sternberg}) yields \begin{equation}\label{eq-stph}
p(a_0,a_1, \hbar) = \hbar^{n/2} \int a_0\overline{a_1} \exp{\left(\frac{i}{\hbar} (f_0-f_1)\right)}d\mu + O(\hbar^{1+n/2})\;,
\end{equation}
where $\mu$ is a complex-valued measure. Crucially, $\mu$  supported in a finite set $\mathcal{Q}$ of the $q$-coordinates of the intersection points from $L_0 \cap L_1$.
Note that (tautologically!) this set is in a one to one correspondence with the set of chords connecting $\Lambda_0$ and $\Lambda_1$. In fact, the points of $\mathcal{Q}$ are exactly the
special values of intensive variables corresponding to ultrafast processes. It would
be interesting to explore this analogy further.

\section{Slow global processes and partial order on Legendrians}\label{sec:slow}

Looking at an equilibrium Legendrian submanifold $L$ of $\mathcal{T}$, the reduced thermodynamic space with the temperature and the entropy being the only reduced variables,
we  cannot, in general, reconstruct the temperature unless we have an additional knowledge about the system. However, the existence of a temperature-nondecreasing slow global process which connects two given properly embedded Legendrians $L_0$ and $L_1$ imposes a contact topological constraint on $L_0$ and $L_1$ which we  explain in this section.

 All Legendrians considered below are assumed to be properly embedded, i.e.
their intersection with every compact subset of $\mathcal{T}$ is compact.

Let us pass for a moment to a more abstract setting of a contact manifold $\Sigma$ with a co-oriented contact structure $\xi = \text{ker}(\alpha)$.
Here, $\alpha$ is a contact form on $\Sigma$ defining the co-orientation.  Let $\Lambda$  be
a properly embedded Legendrian submanifold. A
{\it parameterized
Legendrian isotopy} of $\Lambda$ is a map
$$\Gamma: \Lambda \times [0,1] \to \Sigma,\; \Gamma (x,0) = x \;\;\forall x \in \Lambda,$$
such that each $\Lambda_t: = \Gamma (\Lambda\times t)$ is a Legendrian submanifold.
Let
\[
v_t (x) := \frac{\partial \Gamma}{\partial t} (x,t),~~(x,t)\in \Lambda\times [0,1],
\]
be the vector field of the
isotopy. We say that $\Gamma$ is {\it compactly
supported} if so is the (time-dependent) vector field $v_t$.
We call $\{\Lambda_t\}$, $t\in [0,1]$, a {\it non-parameterized Legendrian isotopy}, or just a {\it Legendrian isotopy}, and
$\Gamma$ its parameterization.
We say that a Legendrian isotopy is compactly supported if it admits a compactly supported parameterization.

Let $[\Lambda]$ be the family of all Legendrians in $(\Sigma,\xi)$
that can be obtained
from a given properly embedded Legendrian $\Lambda$ by a compactly supported Legendrian isotopy. We say that a compactly supported Legendrian isotopy $\Lambda_t$, $t \in [0,1]$, of Legendrians in $[\Lambda]$ is {\it non-negative} if
it admits a parameterization with a vector field $v_t$ satisfying
$\alpha(v_t) \geq 0$ (note that if this holds for one parameterization, then it holds for all the others).
We write~$\Lambda_0 \preceq \Lambda_1$
if there exists a non-negative Legendrian isotopy from $\Lambda_0$ to $\Lambda_1$.

 With the geometric terminology as above, slow global thermodynamic processes are
described by
compactly supported parameterized Legendrian isotopies in the thermodynamic
phase space $\Sigma:=J^1 X$ equipped with the contact
structure as above.
We refer to the slow global processes satisfying Assumption \ref{assum-red} as the
{\it admissible global processes}. Admissible processes correspond to non-negative isotopies.
Recall that when the temperature and the entropy are the only reduced variables, admissibility means that the temperature is non-decreasing.

We are now ready to present a few thermodynamic consequences of contact topology.
 The topological results used below are established in the literature for closed manifolds and are, in part, folklore for open ones.  After stating them, together with their thermodynamic interpretations, we outline their proofs and provide relevant references.

The class $[\Lambda]$ is called {\it orderable} if the binary relation $\preceq$ is a partial order. A Reeb chord between two Legendrians is called {\it non-trivial} if it has positive length (as opposed to a point). The space of intensive variables $X$ below is assumed to be a connected manifold without boundary (either closed, i.e. compact, or open, i.e., non-compact).

\begin{thm}  \label{thm-ord}
For any (open or closed) smooth connected manifold $X$,
the class  $[0_X]$ of the zero section $0_X$ in $J^1 X= T^*X \times \R$
is orderable. If two distinct equilibrium Legendrians $\Lambda_0,\Lambda_1 \in [0_X]$ are related by the partial order,  $\Lambda_0 \preceq \Lambda_1$, there is no
admissible  global process taking~$\Lambda_1$ to $\Lambda_0$.
\end{thm}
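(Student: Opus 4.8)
The plan is to deduce the final (thermodynamic) assertion from the orderability of $[0_X]$, so the real work is to prove that $\preceq$ is a genuine partial order; since orderability is itself part of the claim, I cannot simply invoke it. First I would dispose of the easy structural facts: the constant isotopy ($v_t \equiv 0$, so $\alpha(v_t)=0\ge 0$) gives $\Lambda \preceq \Lambda$, and a (smoothed) concatenation of two non-negative isotopies is again non-negative, giving transitivity; hence $\preceq$ is always a preorder, and orderability is exactly antisymmetry: $\Lambda_0 \preceq \Lambda_1 \preceq \Lambda_0 \Rightarrow \Lambda_0 = \Lambda_1$. Because $\preceq$ is invariant under co-orientation-preserving compactly supported contactomorphisms, and because every $\Lambda \in [0_X]$ is the image of $0_X$ under such a contactomorphism (by the Legendrian isotopy extension theorem \cite{Geiges}), I would conjugate and reduce antisymmetry to the single model case: if $0_X \preceq \Lambda$ and $\Lambda \preceq 0_X$, then $\Lambda = 0_X$.

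The core step is to construct a monotone, non-degenerate invariant detecting this. I would use generating functions quadratic at infinity (GFQI): every $\Lambda \in [0_X]$ admits such a generating function $F$, unique up to fiberwise diffeomorphism and stabilization, and from the min-max critical values of $F$ one extracts spectral invariants $c_-(\Lambda)=c(1,F)$ and $c_+(\Lambda)=c([X],F)$, normalized so that $c_\pm(0_X)=0$ (this is the machinery of \cite{CFP}; see also \cite{CN,AA}). I need two properties: (i) monotonicity, namely that a non-negative isotopy from $A$ to $B$ yields $c_\pm(A)\le c_\pm(B)$, since under a non-negative isotopy the front, and hence the relevant min-max critical values, can only move up; and (ii) non-degeneracy, namely $c_+(\Lambda)\ge c_-(\Lambda)$ always, with equality if and only if $\Lambda=0_X$ (the Legendrian analogue of the fact that Viterbo's spectral spread vanishes only on the zero section, proved by a Lusternik--Schnirelmann argument on the sublevel sets of $F$). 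Granting these, $0_X \preceq \Lambda$ forces $c_\pm(\Lambda)\ge 0$ while $\Lambda \preceq 0_X$ forces $c_\pm(\Lambda)\le 0$; hence $c_+(\Lambda)=c_-(\Lambda)=0$, and non-degeneracy gives $\Lambda=0_X$. This proves antisymmetry, and therefore orderability.

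With orderability established, the final statement is immediate. By the definitions in Section \ref{sec:slow}, an admissible global process taking $\Lambda_1$ to $\Lambda_0$ is precisely a non-negative Legendrian isotopy from $\Lambda_1$ to $\Lambda_0$, i.e. a witness of $\Lambda_1 \preceq \Lambda_0$. If such a process existed, then together with the hypothesis $\Lambda_0 \preceq \Lambda_1$ and antisymmetry it would force $\Lambda_0=\Lambda_1$, contradicting the assumption that $\Lambda_0$ and $\Lambda_1$ are distinct. Hence no admissible global process can carry $\Lambda_1$ back to $\Lambda_0$.

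The main obstacle I anticipate is twofold, and lies entirely in the topology, not in the final deduction. The substantive content is the non-degeneracy (ii) together with the existence and uniqueness of GFQIs, which the literature treats for $X$ closed. For open $X$ — the case the theorem also asserts — I would either develop the spectral invariants relative to $0_X$ using compactly supported cohomology (legitimate because each $\Lambda \in [0_X]$ agrees with $0_X$ outside a compact set), or bypass the difficulty with a compactification trick reducing the properly embedded, compactly supported situation to a closed manifold, in the spirit of the reduction to $J^1S^1$ indicated in Remark \ref{rem-infty}. Checking that monotonicity and non-degeneracy genuinely persist under such a reduction is where the care is required.
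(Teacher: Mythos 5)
Your proposal is correct and follows essentially the same route as the paper: the paper's closed-case proof simply cites \cite{CN} (Proposition 5) and \cite{CFP} (Theorem 1), whose content is exactly the generating-function/spectral-selector argument you reconstruct (the same selectors $c_\pm$ the paper invokes for Theorem \ref{thm-B}), and the paper's open-case proof is precisely the doubling trick you list as your second option. The only nitpick is that $c_+(\Lambda)=c_-(\Lambda)$ characterizes the Reeb-translates $\{z=c,\,p=0\}$ of the zero section rather than $0_X$ alone, but since you derive $c_+=c_-=0$ the conclusion $\Lambda=0_X$ stands.
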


\medskip\noindent
The next two results establish a connection between global slow and ultrafast processes.

\begin{thm}\label{thm-B} If two distinct Legendrians $\Lambda_0, \Lambda_1 \in [0_X]$ satisfy
$\Lambda_0 \preceq \Lambda_1$, there exists a non-trivial Reeb chord starting on $\Lambda_0$ and
ending on $\Lambda_1$. In particular, if two equilibrium thermodynamic Legendrians from $[0_X]$ are related by an admissible global process, then
there is  a free-energy non-increasing ultrafast process which starts at an equilibrium state of the first system and ends at an equilibrium state of the second system.
\end{thm}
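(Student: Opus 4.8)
\emph{Overall strategy.} The two assertions coincide after translating notation. A point of an equilibrium Legendrian has $z$-coordinate $z=-G$, and the Reeb field of $\lambda=dz-\sum p_j\,dq_j$ is $\partial_z$; a Reeb chord starting on $\Lambda_0$ at height $z_0$ and ending on $\Lambda_1$ at height $z_1$ therefore has $z_1>z_0$ by the very orientation of the Reeb flow, i.e. $G_1<G_0$. Thus a \emph{non-trivial} chord from $\Lambda_0$ to $\Lambda_1$ is precisely a free-energy non-increasing ultrafast process between an equilibrium of the first system and one of the second, and it suffices to prove the purely contact-geometric claim. The plan is to run the generating-function and min--max machinery that also underlies Theorem~\ref{thm-ord} and the work of \cite{CFP}, and to extract the chord from the positivity of the isotopy witnessing $\Lambda_0\preceq\Lambda_1$. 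As a first step I would reduce to the case relevant for the analysis: since every member of $[0_X]$ differs from $0_X$ by a compactly supported Legendrian isotopy, and the non-negative isotopy realizing $\Lambda_0\preceq\Lambda_1$ is compactly supported, all candidate chords lie over a fixed compact subset of $X$. For open $X$ one works with the compactly supported generating-function theory there (folklore; cf. Remark~\ref{rem-infty}), so the same min--max analysis as in the closed case applies.

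\emph{Generating functions and the chord-detecting function.} Because $\Lambda_0,\Lambda_1\in[0_X]$, each carries a generating function quadratic at infinity $F_0\colon X\times\R^{N_0}\to\R$ and $F_1\colon X\times\R^{N_1}\to\R$ (existence is due to Chaperon and Sikorav; uniqueness up to stabilization, fibre-preserving diffeomorphism and an additive constant is the Viterbo--Th\'eret theorem; see \cite{CFP}). I would then form the difference function $W(q,\xi_0,\xi_1)=F_1(q,\xi_1)-F_0(q,\xi_0)$ on $X\times\R^{N_0}\times\R^{N_1}$, which is again quadratic at infinity. Its critical points correspond exactly to Reeb chords between $\Lambda_0$ and $\Lambda_1$: the fibre equations $\partial_{\xi_0}F_0=0$ and $\partial_{\xi_1}F_1=0$ place the two endpoints on the respective Legendrians, while $\partial_q W=\partial_q F_1-\partial_q F_0=0$ forces their base points in $T^*X$ to agree, and the critical value equals $F_1-F_0=z_1-z_0$, the signed length of the chord. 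Hence a positive-length chord from $\Lambda_0$ to $\Lambda_1$ is nothing but a critical point of $W$ with positive critical value. On $W$ I would use the min--max critical values $c(1,W)\le c([X],W)$ associated with the unit class and the fundamental class of $X$; each is a genuine critical value of $W$, hence realized by an actual chord.

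\emph{Using positivity, and the main obstacle.} It remains to produce a critical point of $W$ with positive value, and here the non-negativity of the isotopy enters. Choosing a smooth family $F_t$ of generating functions for the isotopy $\{\Lambda_t\}$ and setting $W_t(q,\xi_t,\xi_0)=F_t(q,\xi_t)-F_0(q,\xi_0)$, the key computation is that along the isotopy one has $\lambda(v_t)=\partial_t F_t$ at every fibre-critical point; the hypothesis $\lambda(v_t)\ge 0$ thus makes $\partial_t W_t\ge 0$ at all critical points of $W_t$, so the selected values $t\mapsto c(1,W_t)$ and $t\mapsto c([X],W_t)$ are non-decreasing. Since $W_0$ is the self-difference of $\Lambda_0$, whose diagonal critical locus sits at level $0$, this yields $c([X],W_1)\ge c([X],W_0)\ge 0$. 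The heart of the matter is to upgrade this to a \emph{strict} inequality when $\Lambda_0\neq\Lambda_1$: one must combine the monotonicity with a min--max rigidity statement asserting that vanishing of the spectral spread of the difference function characterizes coincidence of the two Legendrians, so that $c([X],W_1)=0$ would force $\Lambda_0=\Lambda_1$, a contradiction. I expect precisely this strictness step — normalizing the self-difference invariants and ruling out that the min--max value remains trapped at a spurious self-chord level — to be the main technical obstacle, and it is exactly the point at which the orderability input behind Theorem~\ref{thm-ord} and the arguments of \cite{CFP} are indispensable. Once $c([X],W_1)>0$ is established, the corresponding critical point is a positive chord, and unwinding the translation of the first paragraph delivers the asserted free-energy non-increasing ultrafast process.
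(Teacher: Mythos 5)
Your proposal follows essentially the same route as the paper: generating functions quadratic at infinity, the difference function, the spectral selectors associated with the unit class and the fundamental class, their monotonicity under non-negative Legendrian isotopies, and spectrality to extract a chord of length equal to the selected value. The strictness step you flag as the main obstacle is exactly where the paper invokes the standard non-degeneracy of the selectors: monotonicity gives \emph{both} $c_-(\Lambda_1)\ge 0$ and $c_+(\Lambda_1)\ge 0$, and $c_-(\Lambda)=c_+(\Lambda)=0$ forces $\Lambda=0_X$, so $\Lambda_1\ne\Lambda_0$ yields $c_+(\Lambda_1)>0$; this is the rigidity statement you correctly anticipate, available in the cited generating-function literature. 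The only genuine divergence is the open case, which the paper handles not by ``folklore compactly supported theory'' but by an explicit reduction to the closed case: one passes to the double $Y_C$ of a compact exhaustion $X_C=\{f\le C\}$ containing the support of the isotopy, applies the closed-manifold result there, and observes that the resulting chord lies in $J^1X_C$.
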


\begin{thm}\label{thm-C}
For every two distinct Legendrians in $[0_X]$ there exists a non-trivial Reeb chord
connecting them (in an unspecified order). If these are the equilibrium Legendrians
of two thermodynamic systems, there exists an ultrafast process that starts at
an equilibrium state of one system and ends at an equilibrium state of the other.
\end{thm}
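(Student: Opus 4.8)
The plan is to detect Reeb chords as critical points of a single auxiliary function built from generating functions of $\Lambda_0$ and $\Lambda_1$, and then to use distinctness to force a critical point at a nonzero level. Recall that the Reeb field of $\lambda = dz - \sum_j p_j\,dq_j$ on $J^1X$ is $\partial_z$, so a Reeb chord connecting $\Lambda_0$ and $\Lambda_1$ is a segment parallel to the $z$-axis with one endpoint on each Legendrian; its endpoints project to a common $(p,q) \in T^*X$, and its signed length is the difference of the two $z$-coordinates. The chord is non-trivial exactly when this difference is nonzero, and the sign of the difference records whether it runs from $\Lambda_0$ to $\Lambda_1$ or the other way. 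This is why the present statement, indifferent to the order, is weaker than Theorem~\ref{thm-B}: here we only need a chord of nonzero length, not one of a prescribed sign.

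First I would equip each $\Lambda_i$ with a generating function quadratic at infinity $f_i \colon X \times \R^N \to \R$. These exist because $\Lambda_i \in [0_X]$, i.e. $\Lambda_i$ is obtained from the zero section by a compactly supported Legendrian isotopy, and the generating function theory (see \cite{CFP}) produces $f_i$ that coincides with a fixed nondegenerate quadratic form $Q$ outside a compact subset of $X \times \R^N$; after stabilization both may be taken on the same fiber with the same $Q$ at infinity. Crucially, the compact support is what tames a possibly non-compact base $X$ (cf. Remark~\ref{rem-infty}): one reduces to the closed case either by one-point compactification of $X$ or by working with cohomology relative to infinity. The difference function
\[
F(q,\xi_0,\xi_1) := f_1(q,\xi_1) - f_0(q,\xi_0)
\]
then coincides with a nondegenerate quadratic form outside a compact set, in the fiber directions and at infinity in $X$ alike, so min-max applies to it. A direct computation identifies the critical points of $F$ with the Reeb chords between $\Lambda_0$ and $\Lambda_1$: vanishing of $\partial_{\xi_0}F$ and $\partial_{\xi_1}F$ places the endpoints on the respective Legendrians, vanishing of $\partial_q F$ matches their $p$-coordinates, and the critical value $F = f_1 - f_0$ is the signed chord length.

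Next I would apply Lusternik--Schnirelmann / min-max theory for functions quadratic at infinity to obtain two spectral numbers $c_-(F) \le c_+(F)$, attached to the unit and to the fundamental cohomology class of $X$ (taken relative to infinity in the open case); the general theory guarantees that each $c_\pm(F)$ is an honest critical value of $F$. The decisive input is the rigidity statement that $c_-(F) = c_+(F)$ if and only if $F$ is fiber-equivalent, after stabilization, to the trivial quadratic form, equivalently if and only if $\Lambda_1 = \Lambda_0$; this is the generating-function form of the non-degeneracy underlying the partial order and is contained in \cite{CN,CFP}. Granting it, the hypothesis $\Lambda_0 \ne \Lambda_1$ gives $c_-(F) < c_+(F)$, so these are two distinct reals and at least one of them is nonzero. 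The associated critical point of $F$ is then a Reeb chord of nonzero length, i.e. a non-trivial Reeb chord connecting $\Lambda_0$ and $\Lambda_1$, with its order dictated by the sign of the nonzero value. The thermodynamic conclusion of the theorem follows exactly as in Theorem~\ref{thm-B}.

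I expect the main obstacle to lie in the rigidity step and, intertwined with it, in handling open $X$. Setting up min-max uniformly for a non-compact base requires verifying that the compactly supported isotopy genuinely makes $F$ standard at infinity in $X$, so that the compactified (or relative-to-infinity) spectral invariants $c_\pm$ are well defined and are realized as critical values; this is the place where the soft part of the argument has to be done with care. Harder still is the implication $c_-(F) = c_+(F) \Rightarrow \Lambda_0 = \Lambda_1$, which is genuine contact rigidity and cannot be extracted from naive Morse theory: it is precisely the input that prevents two distinct Legendrians from sharing all their chord endpoints at a single level.
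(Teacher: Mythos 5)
Your overall strategy---generating functions quadratic at infinity, the difference function $F=f_1-f_0$ whose critical points are the Reeb chords and whose critical values are the signed chord lengths, and the two spectral numbers $c_-\le c_+$ attached to the point class and the fundamental class---is exactly the route the paper takes (it invokes \cite{CN1,CFP,Z} in the closed case and \cite{B} for $X=\R^n$). However, the rigidity statement you single out as the decisive input is false as you state it. It is not true that $c_-(F)=c_+(F)$ forces $\Lambda_1=\Lambda_0$: if $\Lambda_1=\{z=c,\;p=0\}$ is the Reeb translate of the zero section $\Lambda_0=0_X$ by some $c\neq 0$, then every chord from $\Lambda_0$ to $\Lambda_1$ has length $c$, so $c_-(F)=c_+(F)=c$, yet $\Lambda_1\neq\Lambda_0$. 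Consequently your intermediate deduction ``$\Lambda_0\neq\Lambda_1\Rightarrow c_-(F)<c_+(F)$'' does not hold, and neither does the asserted equivalence with $F$ being stably fiber-equivalent to a quadratic form. The correct uniqueness statement, and the one the paper actually uses, is that $c_-$ and $c_+$ cannot \emph{both} vanish unless $\Lambda_1=\Lambda_0$: if $c_-(F)=c_+(F)$, then by Viterbo-type uniqueness applied to the Lagrangian projections $\Lambda_1$ is the Reeb translate of $\Lambda_0$ by the common value, and if that common value is $0$ the two Legendrians coincide. With this corrected lemma your conclusion survives verbatim---distinctness gives that at least one of $c_\pm(F)$ is nonzero, and spectrality turns that nonzero value into a non-trivial chord---but the strict inequality $c_-<c_+$ is not the mechanism.

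A secondary point concerns open $X$. The paper does not use a one-point compactification (which for a general open manifold is not a manifold), nor cohomology relative to infinity except in the case $X=\R^n$ following \cite{B}. Instead it uses that a compactly supported Legendrian isotopy is supported in $J^1X_C^\circ$ for a sublevel set $X_C=\{f\le C\}$ of an exhausting function, and passes to the closed double $Y_C$ of $X_C$; the chords produced over $Y_C$ are non-trivial and therefore already lie in $J^1X_C$. Your instinct that compact support is what tames the non-compact base is right, but the doubling trick is the clean way to realize it.
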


\medskip
\noindent
{\bf Comments on the proofs:}

\noindent
{\sc Case I:} $X$ is a closed manifold.

In this case, Theorem \ref{thm-ord} follows from \cite[Proposition 5]{CN}  and \cite[Theorem 1]{CFP}.
Theorem \ref{thm-B} can be deduced from the theory of generating functions
and their spectral selectors \cite{CN1,CFP,Z} along the following lines. Assume without loss
of generality that $\Lambda_0$ is the zero section.  Let~$c_+$ and $c_-$ be the spectral selectors
corresponding to the fundamental class (i.e., the class of $X$)  and the class of the point in the homology $H_*(X,\mathbb{Z}_2)$ with the  $\mathbb{Z}_2$-coefficients, respectively.  If $\Lambda_0 \preceq \Lambda_1$, then
$c_+(\Lambda_1) \geq 0$. Since~$\Lambda_0 \neq \Lambda_1$, we have $c_+(\Lambda_1) >0$.
 It follows from the spectrality property of the spectral selectors (see e.g. Proposition 2.4(i) in \cite{Z}) that there exists a chord of time-length $c_+(\lambda_1)$ starting on $\Lambda_0$ and ending on~$\Lambda_1$, as required.  Theorem \ref{thm-C} follows from the fact that since $\Lambda_0 \neq \Lambda_1$, at least one of~$c_-(\Lambda_1)$ or $c_+(\Lambda_1)$ is distinct from~$0$.

A different argument which deduces Theorems \ref{thm-B} and \ref{thm-C} from the orderability of the class $[0_X]$ is discovered in a recent paper by Allais and Arlove \cite{AA}, see Corollary 1.5 and Theorem 4.7 therein. Interestingly enough, this argument works on
arbitrary contact manifolds, with $[0_X]$ being replaced by arbitrary orderable Legendrian isotopy class of a closed Legendrian submanifold. Our exposition is influenced by this paper.

\medskip
\noindent
{\sc Case II:} $X$ is an open manifold.

When $X = \R^n$, all the results follow from the spectral selectors constructed in \cite{B}.
Existence of generating functions and the first properties of spectral selectors on
general open manifold is elaborated in \cite{CN1}. A quick reduction of the open case
to the closed one is obtained by the following construction. Choose an  exhausting proper bounded
from below function $f: X \to \R$. Set $X_C := \{f \leq C\}$, where $C$ is a regular value
of $f$. Let $X_C^\circ$ be the interior of $X_C$.
Any compactly supported Legendrian isotopy in $J^1X$ is supported in $J^1X_C^\circ$ for some
large $C$. Now do the following trick: pass to the double of $X_C$, denoted by $Y_C$ --
in other words, glue $X_C$ with itself along the boundary $\{f=C\}$.
Crucially, $Y_C$ is a closed manifold.

Theorem \ref{thm-ord} follows from the fact that every non-negative Legendrian loop in $J^1X$
corresponds to such a loop in $J^1Y_C$ and hence is constant by Theorem \ref{thm-ord} applied to $Y_C$.
Theorems~\ref{thm-B} and \ref{thm-C} follow from the fact that in the closed case the chords constructed
in  Step 1 are non-trivial. Thus, when  we apply these theorems to $J^1Y_C$, we get the chords lying in
$J^1X_C$. This completes the proof.

\qed.

\begin{rem} \label{rem-A} {\rm We have considered two models of thermodynamic processes:
thermodynamic
paths and paths of Legendrian submanifolds, both in the thermodynamic phase space. It is
natural to consider a model which interpolates
between these two, as follows.  Given an initial Legendrian submanifold $\Lambda_0 \subset \mathcal{T}$,
we consider a smooth
 (not necessarily Legendrian!)
isotopy $$\Gamma: \Lambda_0 \times [0,1] \to \mathcal{T},\; \Gamma (x,0) = x \;\;\forall x \in \Lambda,$$
such that $\Lambda_1:= \Gamma(\Lambda_0\times 1)$ is a also Legendrian submanifold
(but intermediate $\Lambda_t : = \Gamma (\Lambda\times t)$ do not have to be Legendrian).
Additionally, we assume that all individual paths $t\mapsto \Gamma(x,t)$ are non-negative (or, as a variation of this problem, positive, i.e., transversal to the contact hyperplanes and respecting the coorientation). We call such smooth isotopies non-negative/positive.
In other words, we model a process by a path of submanifolds
connecting the initial and the terminal equilibrium submanifolds without assuming that in
the course of the process the system is  always in an equilibrium,
but insisting that the individual
trajectory of every state agrees with the laws of thermodynamics,  as in (\ref{25jan2806})
and~(\ref{25feb1110}). At the moment, it is not
clear to which extent the non-negativity/positivity assumption is restrictive in this context.
 On one hand, there do exist examples of pairs of
(compact) Legendrian submanifolds that can be connected by a non-negative smooth isotopy
but cannot be connected by a non-negative Legendrian isotopy.
For instance,  it is not hard to construct a non-negative smooth isotopy between
$\{z=1, p=0\}$ and $\{z=-1, p=0\}$ in $J^1S^1$, though
there is no non-negative Legendrian isotopy between these curves.
On the other hand, it is unclear how to characterize or detect
such pairs in general.
%Is it true that every two smoothly isotopic
%Legendrians are connected by a non-negative/positive  smooth (but not necessarily Legendrian) isotopy? Or, maybe, in some situations the
%existence of a non-negative/positive isotopy yields $\Lambda_0 \preceq \Lambda_1$?
}
\end{rem}

\begin{rem}\label{rem-LY} {\rm We refer to \cite{LY} for appearance of some other order relations in thermodynamics. It would be interesting to relate the discussion in \cite{LY} with
the partial order on Legendrians considered above.} \end{rem}

\section{Ultrafast processes as Reeb chords}\label{sec-ultra}

In this section, we discuss two examples of ultrafast processes: an ideal gas and the Curie-Weiss magnet.
The ideal gas model is directly related to the Stirling engine, a well known device, where the
ultrafast process can be observed experimentally~\cite{Stirling-Nature}.  In the Curie-Weiss magnet context,
we also establish the existence of an ultrafast process connecting an equilibrium of a magnet at a given temperature
to a perturbation of the magnet at a larger temperature.

\subsection{An isochoric temperature and pressure jump}\label{sec:ideal-gas}

Consider the lattice gas consisting of $N$ particles occupying $V$ sites at temperature $T$ in the thermodynamic limit $V,N \to +\infty$ so that $V/N \to v$. Furthermore, we assume that the gas
is diluted, $v \gg 1$.
 This is a model approximating the ideal gas -- see \cite[Section 1.3.2]{FV} for details.

In this model, the temperature $T$
and the pressure  $P$  are the intensive variables, while the volume $v$ is the extensive variable.
We will not need to use the entropy in this example.
If the Bolzmann and the gas constants
are taken to be $1$, in an equilibrium we have
\be\label{25feb1116}
(P+P_{\rm back})v=T.
\ee
Here, $P_{\rm back}$ is a background pressure that we treat as a physical parameter of the system.
If  the molecules do not interact so that the internal energy vanishes, then
the Hamiltonian in this model is linear, of the form (\ref{25feb604})-(\ref{25feb602}) with $V_{\rm int}(m)=0$. Then,
a straightforward calculation following~\cite{FV} shows that  the equilibrium free energy~$G$
in the ideal gas approximation is
\[
G= -T\ln v = T\ln((P+P_{\rm back})/T).
\]
We warn the reader that in this model the internal energy per particle vanishes, as opposed to the standard model of the ideal gas where it equals $\frac{3}{2}T/v$.

Assume now that the temperature jumped from $T_0$ to $T_1 > T_0$ and
the background pressure
 simultaneously
jumped from $P_{\rm back}=0$ to  $P_{\rm back}=c >0$,
while the volume is kept fixed. One can think that the gas
is contained in a chamber with one of the walls being a piston of area $a$,
and one instantly changes the temperature
and simultaneously switches on a force~$f$ applied
to the piston and directed towards the gas so that $c = f/a$.

Given the parameters $T_0$, $T_1$ and $c$, there exist unique values of the volume $v$
and the initial pressure $P_0$ when such a jump corresponds to the ultrafast process.  This happens if, after the jump, the system lands in the equilibrium of the terminal system, so that
\begin{equation}\label{eq-1-v}
v= \farc{T_0}{P_0} = \farc{T_1}{P_0 +c}\;.
\end{equation}
We compute that
\be\label{25feb1114}
P_0 = \frac{cT_0}{T_1-T_0}, \; v= \frac{T_1-T_0}{c}\;.
\ee
In particular, if the background pressure jump   $c \ll 1$ is small, and $T_1-T_0=O(1)$,
the volume~$v$ is large in accordance to the ideal gas approximation.
 Recall that this ultrafast process corresponds to the Reeb chord
connecting initial and terminal equilibrium Legendrians. In the $(v,P)$-plain this chord corresponds to the {\it intersection point} of the Lagrangian projections of these
Legendrians. This is visualized on Figure~\ref{fig-jump-gas}.
\begin{figure}[htb]
\includegraphics[width=0.6\textwidth]{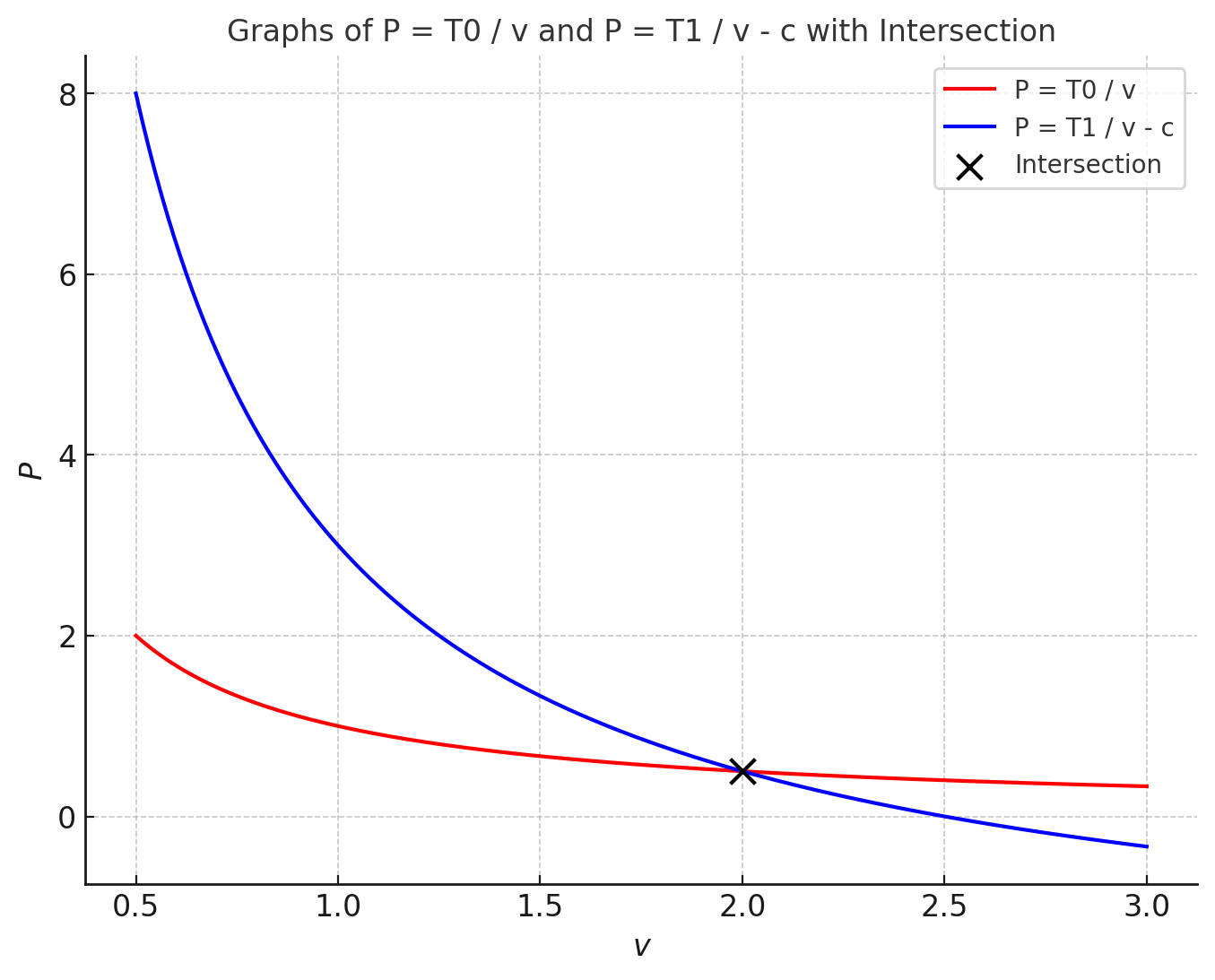}
\caption{ Ultrafast process for $T_0=1,T_1=5,c=2$}\label{fig-jump-gas}
\end{figure}
Note that the change in the Gibbs free energy is negative if the pressure jump $c<T_1-T_0$:
%\black{is this even correct for arbitrary $T_0$, $T_1$ and $c$? You probably mean for $c\gg 1$ still? }
\begin{equation}\label{eq-G}
G_1 -G_0 = (T_0-T_1) \ln v < 0\;.
\end{equation}
Let us mention that isochoric temperature and pressure jumps appear as two of the four segments of
the Stirling engine cycle\footnote{We thank S. Goto for the reference to the Stirling engine.
For a concise description of the Stirling engine see https://en.wikipedia.org/wiki/Stirling\_engine. The attribution for Figure~\ref{fig-stirling}
is  to Cristian Quinzacara, CC BY-SA 4.0 https://creativecommons.org/licenses/by-sa/4.0, via Wikimedia Commons.}
 as depicted in Figure~\ref{fig-stirling}. It is tempting to view the  segment of the engine
where the temperature goes up at a fixed volume as  precisely the above Reed chord,
since in its derivation we have only used the ideal gas equilibrium law~$PV=T$ that also holds for the Stirling engine.  We shall see below that this is indeed the case, albeit
after the shift of the terminal Legendrian submanifold along the $P$-axis by a
suitable background pressure.
Let us also mention that a recent paper~\cite{Stirling-Nature}
 dealing with a microscale Stirling engine
 states that
``the isochoric transitions were nearly
instantaneous and occurred on millisecond time scales".
\begin{figure}[htb]
\includegraphics[width=0.6\textwidth]{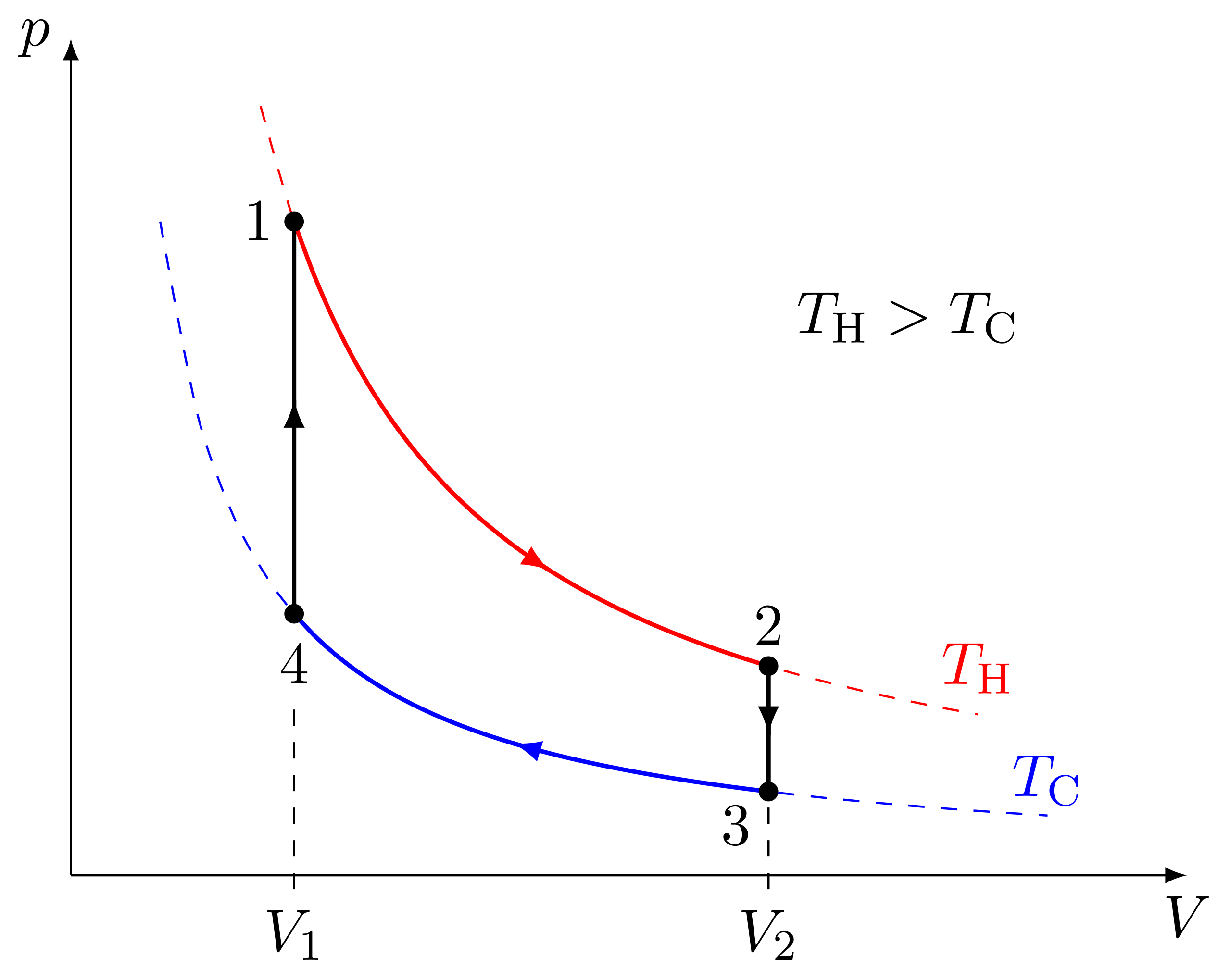}
\caption{The Stirling engine cycle, with $T_0=T_C<T_H=T_1$.}\label{fig-stirling}
\end{figure}

\begin{rem}\label{rem-Maxwell}
{\rm Our model ignores the Maxwell–Boltzmann distribution of particle velocities. A possible objection to this simplification is that the Maxwell–Boltzmann distribution depends on temperature, so a temperature jump would inevitably drive the system out of equilibrium. To address this, we tacitly assume that relaxation of the velocities to equilibrium is much faster than that of
the coordinates. The physical feasibility of this assumption will be discussed elsewhere.}
\end{rem}

\begin{rem}
\label{rem-back}
{\rm When discussing an instantaneous isochoric process, as in the Stirling engine, temperature acts as a parameter, while pressure is a macroscopic variable. Introducing an artificial parameter, the background pressure, such that the total pressure is given by
$P+P_{\rm back}$, allows us to describe the
 simultaneous
jump of two parameters:
$T$ jumps from $T_0$ to $T_1$, and $P_{\rm back}$ from $0$ to $c$.
More generally, this trick applies to processes in which intensive variables exhibit instantaneous jumps while the corresponding extensive variables remain unchanged,
see Section \ref{subsec-mag} for another example.
 This not only
streamlines the exposition, but, more importantly, gives rise to the connection between such processes and Reeb chords.}
\end{rem}

Let us explain now how such ultrafast process corresponds to a Reeb chord
between two Legendrian submanifolds. We work in the reduced thermodynamic phase space $(z,p,q)$
where~$z=-G, p=v, q=-P$. This notation is in accordance with the fact that  volume~$v$ is an extensive variable and the pressure $P$
is the intensive variable\footnote{We hope that the difference between $p$ and $P$ will not cause confusion for the reader.}.

The Legendrian submanifold corresponding to the ideal gas is given by
\be\label{25jan3108}
\Lambda(T,P_{\rm back})= \{(z,p,q) \in \R^3\;:\; z= \phi_T(q-P_{\rm back}),\; p = \phi'_T(q-P_{\rm back})\}\;,
\ee
where $\phi_T(q) = -T\ln(-q/T)$. Let us set
$\Lambda_0 = \Lambda(T_0,0)$ and $\Lambda_1 = \Lambda(T_1,c)$.

To detect the chord connecting $\Lambda_0$ and $\Lambda_1$, one needs to solve the equation
$$p_0 = p_1 = -T_0/q = -T_1/(q -  c),$$
which coincides with \eqref{eq-1-v}. Thus, the  ultrafast process discussed above,
corresponds exactly to the Reeb chord connecting $\Lambda_0$ with $\Lambda_1$.
Furthermore, the orientation of the chord is given by
$\partial/\partial z$ since $z_1 - z_0 >0$ by \eqref{eq-G} and, as we recall, $z=-G$.

Let us illustrate the Reeb chord after the change of variables
\[
\overline{z} = z- \phi_{T_{0}}(q), ~~ \overline{p} = p -  \phi'_{T_{0}}(q), ~~\overline{q} = q\;.
\]
It preserves the Gibbs form, sends $\Lambda_0$ to the zero section $\overline\Lambda_0$, and $\Lambda_1$ to
\[
\overline\Lambda_1=\{(z,p,q) \in \R^3\;:~\overline{z} = \psi (\overline{q}),~
\overline{p}= \psi' (\overline{q})\},\;
\]
where
$$\psi(\overline{q})= \phi_{T_1}(\overline{q}-c) -\phi_{T_0}(\overline{q})\;.$$
The projections of the resulting Legendrians and the chord onto the $(q,z)$ - plane (the front projection)
are given on Fig.~3.

\begin{figure}[htb]
\includegraphics[width=0.6\textwidth]{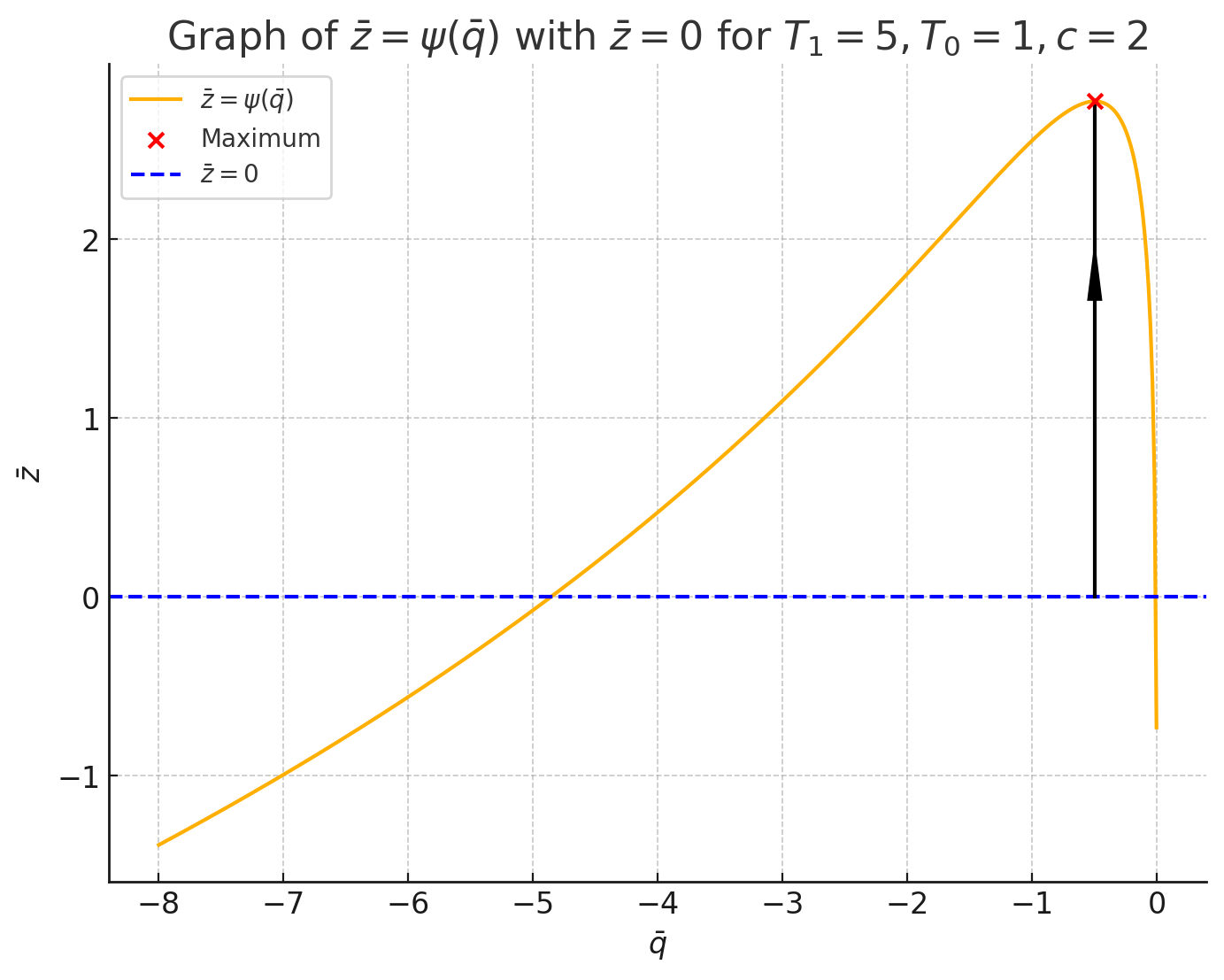}
\caption{ Reeb chord for $T_0=1,T_1=5,c=2$}
\end{figure}

\begin{rem}\label{rem-chord-1}
{\rm It sounds likely that there is no non-negative Legendrian isotopy
(that is, a slow global
thermodynamic process) connecting $\Lambda_0$ and $\Lambda_1$.
To make this statement rigorous, one has to find a way to deal with the lack of compactness in this example, cf. Remark \ref{rem-infty}
above.
For instance, let us modify $\overline\Lambda_1$ below $\overline\Lambda_0$ so that it coincides with the line $\{z=-1, p=0\}$ outside
a compact set. Twist now $J^1\R$ into $J^1S^1$ by taking the quotient by
$(z,p,q) \mapsto (z,p, q+C)$ with a large constant $C$, so that $\overline\Lambda_0$ and $\overline\Lambda_1$ are modified to the
Legendrian circles $\Lambda'_0$ and $\Lambda'_1$. One can show that these circles
are {\it incomparable} with respect to the partial order. This can be proved with the help
of Legendrian spectral invariants which are monotone with respect to non-negative Legendrian isotopies, see \cite[Section 4.1]{CFP}.
 }
\end{rem}

\begin{exam}\label{example-reeb-gas}{\rm
Here, we illustrate that Reeb chords can also
correspond to a
slow thermodynamic path, still on the ideal gas example.
%We consider the ideal gas model with the gas law \( Pv = T \). Assume that the initial temperature is \( T \), the pressure is \( P \),
%and the volume per particle is \( v \gg 1 \).
Suppose the temperature increases slowly with time \( 0\le t \leq 1 \), given by \( T(t) = T_0 + (T_1-T_0)t \),
and the external pressure also increases slowly \( P_{\rm ext}(t) = c t \), while the volume remains constant,~\( v(t) = v \).
%
%The thermodynamic space is equipped with the coordinates \( (z = -G, p = v, q = -P) \), where \( G \) is the free energy. At equilibrium, at time \( t \), we have
%\[
%G = T_t \ln \left(\frac{P_t}{T_t}\right).
%\]
%This time using the shift $R_{ct}$ of~$\R^3$ by $ct$ along the $q$-axis, the
The Lagrangian projection of the equilibrium Legendrian at time \( t \) onto the \( (p, q) \)-plane is given by
\[
p(-q +c t) = T_0 + (T_1-T_0)t.
\]
It follows that each of this family of
slow thermodynamic
  paths
 passes through the point given by (\ref{25feb1114}):
\[
 (p_*, q_*) =\Big(\farc{T_1-T_0}{c}, -\farc{cT_0}{T_1-T_0}\Big).%(-c, -T/c).
 \]
In addition, the negative free energy increases during this process:
\[
z(t) = -G(t)= T(t) \ln v(t) =  \bigl(T + (T_1-T_0)t\bigr) \ln v, \quad v \gg 1.
\]
Thus, the individual trajectory of the macroscopic state \( (z_0, p_*, q_*) \) is a Reeb chord.  Note that the
 Legendrian
isotopy described above is non-negative, and hence represents a legitimate thermodynamic process,  only if we consider a small neighborhood
of the point $(T_0\ln q_*, p_*, q_*)$ on the initial Legendrian.}
\end{exam}

\subsection{A temperature and magnetic field jump with constant magnetization}\label{subsec-mag}

We start with  a discussion based on \cite{EPR}.
Consider the Curie-Weiss magnet in the thermodynamic limit, see Example \ref{exam-magnetic}. As the external magnetic field
$H$ is an intensive variable, and the magnetization $M$ is an extensive one, we use the notation
$p=M\in\Rm$ and~$q=H\in\Rm$.  Recall (cf. also Remark \ref{rem-back}) that the total magnetic field is given by $H+H_{\rm back}$, where $H_{\rm back}$ is the background magnetic field, playing
a role of the parameter. Suppose that the temperature of the Curie-Weiss magnet jumped from $T_0$ to $T_1 > T_0$, and that the background magnetic field jumped from $0$ to $c \in \R$. The reduced Legendrian equilibrium submanifold $\Lambda(T,H_{\rm back})$ is defined by
the first two equations in (\ref{25jan3102}).
Let us set $\Lambda_0= \Lambda(T_0,0)$, and let $\Lambda_1 = \Lambda(T_1,c)$.

We claim that there is a unique chord Reeb chord joining $\Lambda_0$ and $\Lambda_1$.
Indeed, it follows from the second equation in (\ref{25jan3102}) that such a chord projects to the point $(p,q)$ if and only if
\be\label{25jan3104}
q+c +bp = T_1 \tanh^{-1}p, ~~ q +bp = T_0 \tanh^{-1}p\;,
\ee
which gives
\[
p = \tanh \frac{c}{T_1-T_0}\;.
\]
Substituting back into either of the two equations in (\ref{25jan3104}), we get
\begin{equation}\label{eq-q}
q= \frac{cT_0}{T_1-T_0} - b\tanh \frac{c}{T_1-T_0}\;.
\end{equation}
Moreover,  the free energy
\[
z=T \ln\Big(2\cosh\frac{q+bp}{T}\Big) - \frac{b}{2}p^2
\]
is an increasing function of $T$ for  $p$ and  $q$ fixed because
\be
\bal
\pdr{z}{T}&= \ln 2+ \ln\Big(\cosh\frac{A}{T}\Big)+\farc{T}{\cosh(A/T)}\sinh\Big(\farc{A}{T}\Big)\Big(-\farc{A}{T^2}\Big)\\
&=
\ln2 +\ln\cosh(A/T)-\farc{A}{T}\tanh\Big(\farc{A}{T}\Big)=\ln\Big(\farc{e^{A/T}+e^{-A/T}}{e^{(A/T)\tanh(A/T)}}\Big)>0.
\enbal
\ee
Here, we have set $A=b+pq$.
Thus, the
chord goes in the direction of the Reeb field~$\partial/\partial z$.
The claim follows.

As discussed in \cite{EPR}, the chords correspond to ultrafast relaxation processes
{\it in the thermodynamic limit} under an additional assumption that they connect
stable (as opposed to unstable or metastable) equilibria of the Curie-Weiss model, (see Remark \ref{rem-stab} above). This happens, for instance, if $bT_1 < 1$,  or if $c>0$ and for $q$ given by \eqref{eq-q} both $q$ and $q+c$ have the same sign.

Keeping the previous notation, we have the following result.

\begin{thm}\label{thm-main}
Let $g_t$, $t \in [0,1]$ be
a compactly supported contact isotopy of $J^1\R$ such that
\be\label{25jan3114}
g_t(\Lambda_1) \cap \Lambda_0 = \emptyset \;\;\;\forall t \in [0,1]\;.
\ee
Then there exists the Reeb chord starting at $\Lambda_0$ and ending at $g_1(\Lambda_1)$, i.e. an instantaneous
relaxation process corresponding to the above jump.
\end{thm}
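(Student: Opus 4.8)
The plan is to reduce the non-compact problem in $J^1\R$ to one about closed Legendrians in $J^1 S^1$, as anticipated in Remark~\ref{rem-infty}, and then run a spectral-selector persistence argument along the isotopy $g_t$.

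First I would compactify. The Lagrangian projections of $\Lambda_0=\Lambda(T_0,0)$ and $\Lambda_1=\Lambda(T_1,c)$ share the same asymptotics: the magnetization equation in~(\ref{25jan3102}) forces $p=M\to\pm 1$ as $q=H\to\pm\infty$, for both temperatures (the finite shifts by $H_{\rm back}$, $b$, $T$ do not affect the limit). Using this common asymptotic behavior I would modify $\Lambda_1$ outside a large compact interval $K$ in the $q$-direction --- chosen to contain both the support of $g_t$ and the unique intersection point found above --- so that it lies strictly below $\Lambda_0$ in the $z$-coordinate there and is invariant under $q\mapsto q+C$ for a large period $C$; I would make $\Lambda_0$ periodic in the same way. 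Passing to the quotient $q\sim q+C$ twists $J^1\R$ into $J^1 S^1$ and turns $\Lambda_0,\Lambda_1$ into closed Legendrians $\Lambda_0',\Lambda_1'$ in the class $[0_{S^1}]$, while the compactly supported isotopy $g_t$ descends. Placing $\Lambda_1'$ strictly below $\Lambda_0'$ outside $K$ serves two purposes: it keeps them disjoint there, so that $g_t(\Lambda_1)\cap\Lambda_0=\emptyset$ upgrades to $g_t(\Lambda_1')\cap\Lambda_0'=\emptyset$ for all $t$ (outside $K$ one has $g_t=\mathrm{id}$, hence $g_t(\Lambda_1')=\Lambda_1'$ below $\Lambda_0'$); and it forbids any upward Reeb chord from $\Lambda_0'$ to $g_t(\Lambda_1')$ outside $K$, so that any such chord I eventually produce is confined to $K$ and descends to a genuine chord in the original $J^1\R$.

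Next I would invoke the spectral selector $c_+$ relative to $\Lambda_0'$ on the orderable class $[0_{S^1}]$ (Theorem~\ref{thm-ord}), regarding $t\mapsto g_t(\Lambda_1')$ as a Legendrian isotopy and setting $\phi(t):=c_+(\Lambda_0',g_t(\Lambda_1'))$. The function $\phi$ is continuous in $t$ by the Lipschitz property of spectral selectors. By spectrality, $\phi(t)=0$ would produce a zero-length chord from $\Lambda_0'$ to $g_t(\Lambda_1')$, i.e.\ an intersection point; since $g_t(\Lambda_1')\cap\Lambda_0'=\emptyset$ throughout, $\phi$ never vanishes. At $t=0$ the computation preceding the theorem exhibits an upward Reeb chord ($z_1>z_0$) from $\Lambda_0$ to $\Lambda_1$ inside $K$, so $\phi(0)>0$. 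By the intermediate value theorem $\phi(1)>0$, and spectrality yields an honest Reeb chord of length $\phi(1)$ from $\Lambda_0'$ to $g_1(\Lambda_1')$. By the confinement arranged above this chord lies in $K$, hence descends to the desired Reeb chord from $\Lambda_0$ to $g_1(\Lambda_1)$ in $J^1\R$.

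The main obstacle I anticipate is the compactification step rather than the spectral argument: one must verify that the asymptotic matching can be carried out so that (a) the modified curves are genuinely $C$-periodic and descend to $[0_{S^1}]$, (b) no spurious intersection or upward chord is created outside $K$, and (c) the confinement of the chord to $K$ is robust along the entire isotopy $g_t$, not only at the endpoints. A secondary point requiring care is the sign bookkeeping for $c_+$: one must ensure that the selector attached to the fundamental class is the one positive at $t=0$, and that its positivity at $t=1$ corresponds to a chord oriented from $\Lambda_0'$ to $g_1(\Lambda_1')$ rather than the reverse. Orderability of $[0_{S^1}]$ is precisely what guarantees that $c_+$ is well defined and behaves well enough for this bookkeeping to close.
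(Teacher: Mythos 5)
Your strategy is viable and the second half is a genuinely different route from the paper's. The paper first applies the explicit contactomorphism (\ref{eq-change-var}), turning $\Lambda_0$ into the zero section $L$ and $\Lambda_1$ into the graph $K$ of $\psi(Q)=\phi_{T_1}(Q+c)-\phi_{T_0}(Q)$, a function asymptotic to $\pm c$ with a unique maximum at $Q_*$; it then interpolates $K$ to the constant section $\{z=-c,\,p=0\}$ outside a cube without creating new chords, glues into $J^1S_R$, and invokes the \emph{interlinking} theorem of \cite{EP-LCH}: since $\widehat{L}$ and $\widehat{K}$ are Legendrian isotopic and joined by a unique non-degenerate chord, they are interlinked, and interlinking is precisely the robust-chord-under-disjointness property the theorem asserts. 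You replace that black box by a direct continuity argument for $\phi(t)=c_+(\Lambda_0',g_t(\Lambda_1'))$: non-vanishing by spectrality plus disjointness, positivity at $t=0$ from the explicit chord at $Q_*$, hence positivity at $t=1$. This is essentially the mechanism of \cite{AA} that the paper already uses in the proof of Theorem \ref{thm-B}, and it buys a more self-contained argument relying only on the standard properties of spectral selectors on $J^1S^1$; the interlinking route buys quantitative persistence of the chord (a controlled length along the whole isotopy) that a bare intermediate-value argument does not give.

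One concrete correction to your compactification step. The assertion that $\Lambda_0$ and $\Lambda_1$ ``share the same asymptotics'' concerns only the $p$-coordinate. The fronts of both Legendrians grow linearly in $|q|$ (from (\ref{25jan3102}), $z\sim|q+H_{\rm back}+bp|$ as $|q|\to\infty$), so neither curve can be periodized by a modification supported outside a compact set; moreover, in the phase-transition regime $b/T>1$ the equation $M=\tanh((H+bM)/T)$ is multivalued over $H$, so $\Lambda_1$ is not even graphical over $q$ and ``lies strictly below $\Lambda_0$ in the $z$-coordinate'' is not well posed in the original variables. You must first normalize by (\ref{eq-change-var}) (or an equivalent contactomorphism sending $\Lambda_0$ to the zero section): only the \emph{difference} of the fronts is bounded, and it is in this normalized picture that ``flatten $\Lambda_1$ below $\Lambda_0$ outside a compact set and periodize'' makes literal sense. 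With that fix, and the checks (a)--(c) you already flag carried out as in the paper's interpolation (monotone in the gluing region, so no spurious $p=0$ points with positive $z$), your argument closes.
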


\begin{proof}  The Legendrian $\Lambda(T,H_{\rm back})$ can be equivalently represented as
\be\label{25jan3112}
z=\phi_T(q+H_{\rm back}+bp)-\farc{bp^2}{2},~~p=\phi_T'(q+H_{\rm back}+bp).
\ee
 Here, we have set
\be
\phi_T(q)=T\ln\Big(2\cosh \Big(\farc{q}{T}\Big)\Big).
\ee
%Put $\phi_T(x):=  T \ln\left(2\cosh\frac{x}{T}\right)$.
We make a change of variables as in \cite{EP}
\begin{equation}\label{eq-change-var}
Q= q+bp, \; P= p-\phi'_{T_0}(Q),\; Z= z- \phi_{T_0}(Q) + \farc{bp^2}{2}\;,
\end{equation}
which preserves the Gibbs form:
\be
\bal
dZ-PdQ&=dz-\phi_{T_0}'(Q)dq-b\phi_{T_0}'(Q)dp+bpdp-(p-\phi'_{T_0}(Q))(dq+bdp)\\
&=dz+(-\phi_{T_0}'(Q)-p+\phi_{T_0}'(Q))dq+(-b\phi_{T_0}'(Q)+bp-bp+\phi'_{T_0}(Q))dp\\
&= dz-pdq.
\enbal
\ee
Hence, it also preserves the Reeb chords. It follows from (\ref{25jan3112}) and (\ref{eq-change-var}) that under this change
of variables, the Legendrian submanifold    $\Lambda_0=\Lambda(T_0,0)$ becomes
the zero section
\[
L=\{Z=0,~P=0\}.
\]
The Legendrian  $\Lambda_1=\Lambda(T_1,c)$  after the change of variables
becomes
\begin{equation}\label{eq-newLag}
K:= \{Z =\psi(Q):=  \phi_{T_1}(Q+c) - \phi_{T_0}(Q),\; P = \psi'(Q)\}\;.
\end{equation}
An elementary analysis shows that $\psi$ is asymptotic to the lines  $
\{z= \pm c\}$ as $Q \to \pm \infty$,
and has its unique maximum at $Q_*= cT_0/(T_1 - T_0)$, compare with the previous section and see Fig.~4.
This maximum corresponds to a Reeb chord from the zero section $L$  to $K$.

\begin{figure}[htb]
\includegraphics[width=0.6\textwidth]{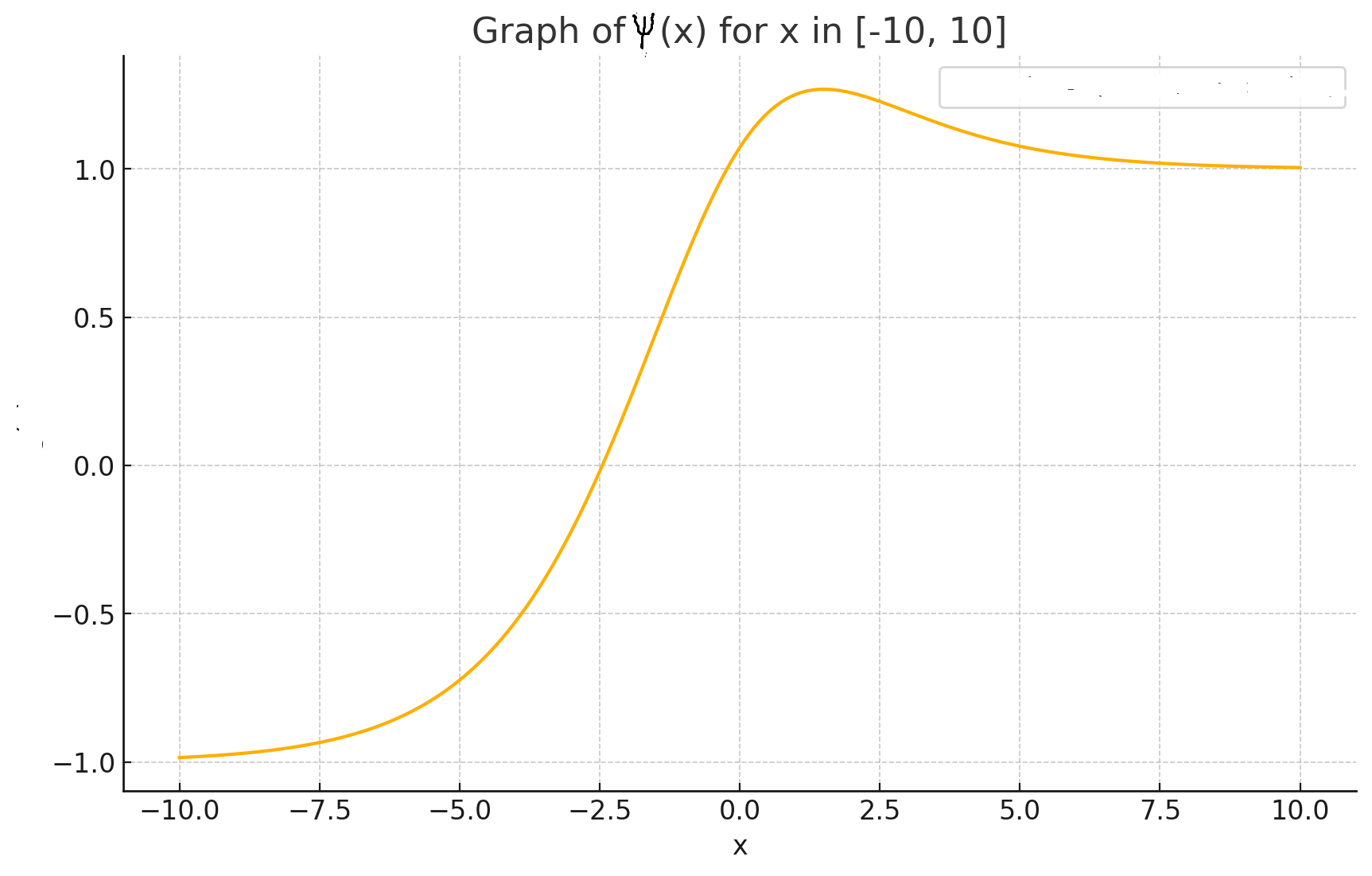}
\caption{$b=1,\beta_0 = 0.5, \beta_1 = 0.3, c= 1$}
\end{figure}

Let now $g_t$, $t \in [0,1]$ be
a compactly supported contact isotopy of $J^1\R$ satisfying (\ref{25jan3114}).
Fix sufficiently large positive numbers $R > r >0$,
let $A_R$ be the cube $[-R,R]^3 \subset \R^3 = J^1\R$ and denote
by $S_R$ the circle $\R/ R\Z$ in the $Q$-variable.
For a Legendrian submanifold $Y$ which coincides with $K$ outside
$A_r$,  we denote by $\widehat{Y}$ the Legendrian submanifold of $J^1S_R$ obtained by the following procedure  (see Fig.~5).
First, interpolate   between $Y$ and the Legendrian submanifold
\[
L_c=\{z=-c, p=0\}
\]
inside
the interior of $A_R$ without creating new chords from the zero section
$L$ to the resulting  submanifold $\widehat{Y}$. Then,
glue together the points of this submanifold
corresponding to $Q=-R$ and $Q=R$. This is possible simply because   $L_c$ does not depend on $Q$.
\begin{figure}[htb]
\includegraphics[width=0.8\textwidth]{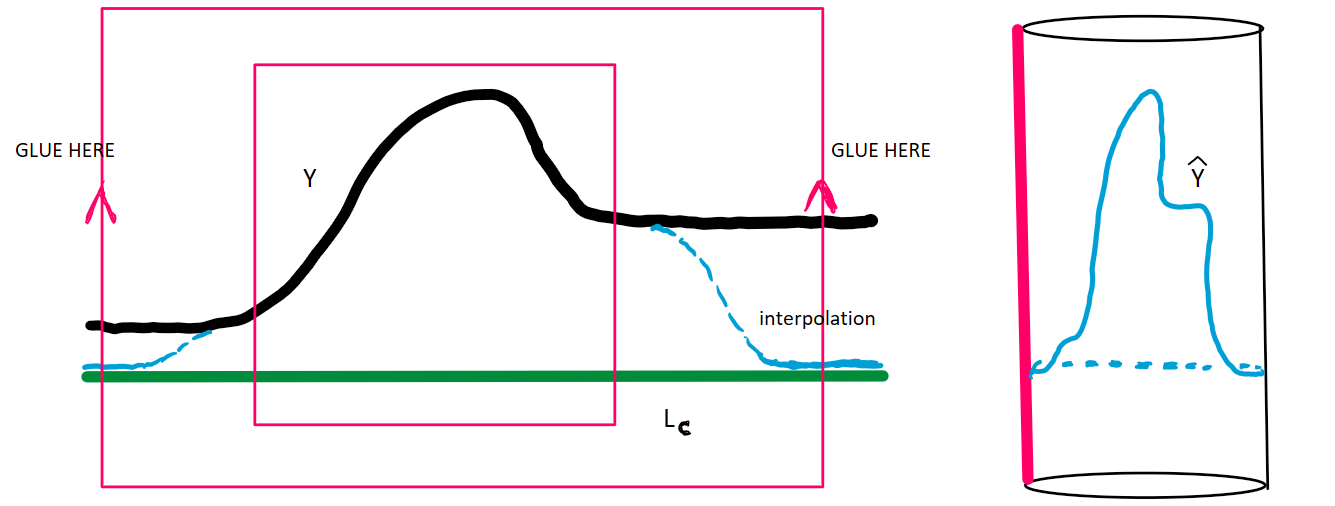}
\caption{Interpolation and gluing in $(Z,Q)$-plane}
\end{figure}
Observe that there is
a unique non-degenerate chord from $\widehat{L}$ to $\widehat{K}$
corresponding to $Q_*$, and that these submanifolds are explicitly Legendrian isotopic
by changing $T$ from $T_0$ to $T_1$. Thus, by a result from~\cite{EP-LCH},  the submaniflds~$\widehat{L}$ to $\widehat{K}$  are interlinked
(see \cite{EP-LCH,EP} for the definition of interlinking).
In particular, if $g_t$, $t \in [0,1]$, is a contact isotopy of $\R^3$ supported in $Q_r$ such that $g_t(K)$ does not touch $L$ for
all $t$, it descends to $J^1S_R$, and hence $\widehat{g_1(K)}$ and $\widehat{L}$ are interlinked.
Since the interpolation above does not create new chords, we get a chord between $g_1(K)$ and $L$. Returning to the original coordinates $(z,p,q)$ we get the statement of the theorem.
\end{proof}

%let X be a closed manifold, j^1(f) and j^1(g) are to Legendrians in J^1X associated
%to functions f and g.

%Claim: f \geq g iff j^1(f) \geq j^1(g).
%Proof: Spectral invariants are monotone w.r.t. the partial order, see discussion
%in Nemirovski's dejavu paper right after the proof of Prop. 3.4. Apply this to j^1(f-g) and %the zero section. QED.

%Let \Lambda_0 and \Lambda_1 be original and terminal submanifolds in our jump model.
%My calculations show that there difference is given by a sign changing function, see Figure in the
%attached file. Thus while there exists an ultrafast relaxation process from \Lambda_0 to \Lambda_1
%(a chord), there is no slow one (positive Legendrian isotopy).

\subsection{An ultrafast (de)magnetization for the Curie-Weiss model}\label{subsec-demag}

Consider the Curie-Weiss magnet at temperature $T$. Assume that the background magnetic field
vanishes. Put $\psi(x) = T\ln \left(2 \cosh (x/T)\right)$.
Recall from Example \ref{exam-magnetic} that in the equilibrium
\begin{equation} \label{eq-z-CWe}
z= \psi(q+bp) - \frac{bp^2}{2}\;,
\end{equation}
\begin{equation} \label{eq-p-CWe}
p = \psi'(q+bp)\;,
\end{equation}
and
\begin{equation} \label{eq-q-CWe}
q= -bp + Tu(p)\;,
\end{equation}
with
$$u(p) = \frac{1}{2} \ln \frac{1+p}{1-p}\;.$$
We view $z$ and $p$ as functions of $b$, with $T$ and $q$ being fixed.
Differentiating \eqref{eq-z-CWe} by $b$ and substituting \eqref{eq-p-CWe} we get that
$$\frac{dz}{db} = p^2/2\;.$$

Furthermore, by \eqref{eq-q-CWe}
$$T^{-1}b = \frac{u(p) - T^{-1}q}{p}\;.$$
Differentiating by $p$ we get
$$T^{-1} \frac{db}{dp} = \frac{\left(pu'(p) - u(p)\right) + T^{-1}q}{p^2}\;.$$
The expression in brackets in the nominator is the $y$-intercept of the function $u$
taken with the opposite sign. Since for $p > 0$ the function $u$ is strictly convex with $u(0)=0$, its intercept is negative. It follows that
for $p >0$, $dz/db > 0$ and $db/dp > 0$ (and hence $dp/db >0$).

In particular, if as the result of certain ultrafast process, the exchange parameter (called sometimes {\it coupling}) $b$ is decreasing, the free energy increases and magnetization drops; and if $b$ is increasing, the free energy decreases and the magnetization is increasing. In some magnetic systems the manipulation of the parameter $b$ can be achieved by
ultrashort laser pulses applied to the system ( see \cite{MPE, M-nature};
we note that none of these papers use mean-field models; we introduce them here as an approximation.)

\noindent{\bf Data availability statement.} This paper does not use any data.

\noindent{\bf Conflict of interest statement.} On behalf of all authors, the corresponding author states that there is no conflict of interest.


\begin{thebibliography}{99}

\bibitem{AA} Allais, S., Arlove, PA, \emph{Spectral selectors and contact orderability}, preprint, arXiv:2309.10578,  2023.

    \bibitem{Bates-Weinstein} Bates, S., Weinstein, A., \emph{Lectures on the Geometry of Quantization}, American Mathematical Soc.; 1997.


\bibitem{Be} Berezin, F., \emph{Lectures on statistical physics, 1966-67}, translated from Russian and edited by D. Leites,  %\url{https://staff.math.su.se/mleites/books/berezin-2009-lectures.pdf},
    2009.

    \bibitem{B} Bhupal, M., \emph{A partial order on the group of contactomorphisms of $\R^{2n+ 1} $ via generating functions}, Turkish Journal of Mathematics. \textbf{25} (2001) 125--36.

  \bibitem{BLN} Bravetti, A., Lopez-Monsalvo, C.S., Nettel, F., \emph{Contact symmetries and Hamiltonian thermodynamics},  Ann. Phys., \textbf{361} (2015), 377--400.

      \bibitem{CN} Chernov, V., Nemirovski, S., \emph{Legendrian links, causality, and the Low conjecture},  Geom. Funct. Analysis, \textbf{19} (2010), 1320--33.

      \bibitem{CN1}    Chernov, V., Nemirovski, S., \emph{Non-negative Legendrian isotopy in $ST^*M$} Geometry and Topology. \textbf{14} (2010), 611--26.


          \bibitem{CFP} Colin, V., Ferrand, E., Pushkar, P., \emph{Positive isotopies of Legendrian submanifolds and applications,}  Int. Mat. Res. Notices,
              \textbf{20} (2017) 6231--54.


    \bibitem{De} Demirel, Y., \emph{Nonequilibrium thermodynamics: transport and rate processes in physical, chemical and biological systems,} Elsevier, 2007.

        \bibitem{ElP} Eliashberg, Y., Polterovich, L., \emph{Partially ordered groups and geometry of contact transformations},  Geom. Funct. Analysis \textbf{10} (2000), 1448--76.

           \bibitem{EP-LCH} Entov, M, Polterovich, L. \emph{Legendrian persistence modules and dynamics}, Journ. Fixed Point Theory and Appl., \textbf{24} (2022), 30.


\bibitem{EP} Entov, M., Polterovich, L., \emph{Contact topology and non-equilibrium thermodynamics}, Nonlinearity, \textbf{36} (2023), 3349-3375.

    \bibitem{EPR} Entov, M., Polterovich, L., Ryzhik, L., \emph{ Geometric aspects of a spin chain},
    Jour. Stat. Phys., \textbf{191} (2024), 154.


    \bibitem{Feynmann-Hibbs} Feynman, R., Hibbs, AR., \emph{The path integral formulation of quantum mechanics}, McGraw-Hiill, New York. 1965.


        \bibitem{FV} Friedli, S, Velenik, Y. \emph{Statistical mechanics of lattice systems: a concrete mathematical introduction}, Cambridge University Press; 2017.

            \bibitem{Geiges} Geiges, H., \emph{An introduction to contact topology}, Cambridge University Press; 2008 Mar 13.


      \bibitem{Goto-JMP2015} Goto, S., \emph{Legendre submanifolds in contact manifolds as attractors and geometric non-equilibrium thermodynamics},
J. Math. Phys. \textbf{56} (2015), 073301.

\bibitem{Goto} Goto, S., \emph{Nonequilibrium thermodynamic process with hysteresis and metastable states -- a contact Hamiltonian with unstable and stable segments of a Legendre submanifold}, J. Math. Phys. \textbf{63} (2022), Paper No. 053302, 25 pp.

    \bibitem{Goto24} Goto, S., \emph{From the Fokker-Planck equation to a contact Hamiltonian system}, Journal of Physics A: Mathematical and Theoretical. 2024 Mar 12.


\bibitem{GLP} Goto, S., Lerer, S., Polterovich, L., \emph{Contact geometric approach to Glauber dynamics near a cusp and its limitation},
J. Phys. A \textbf{56} (2023), Paper No. 125001, 16 pp.

\bibitem{GrO}
Grmela, M., \"Ottinger, H.,
\emph{Dynamics and thermodynamics of complex fluids. I. Development of a general formalism},
Phys. Rev. E., {\bf 56}, (1997), 6620-6632.


\bibitem{Grmela} Grmela, M., \emph{Contact geometry of mesoscopic thermodynamics and dynamics}, Entropy \textbf{16} (2014), 1652-1686.

    \bibitem{Guillemin-Sternberg} Guillemin, V., Sternberg, S., \emph{Semi-classical analysis}, Boston, MA: International Press, 2013.


\bibitem{Haslach} Haslach Jr, H.W.,  \emph{Geometric structure of the non-equilibrium thermodynamics of homogeneous systems}, Rep. Math. Phys., \textbf{39} (1997), 147-162.


\bibitem{Her} Hermann, R., \emph{Geometry, physics, and systems}, Marcel Dekker Inc., New York, 1973.


 \bibitem{Ja} Jarzynski, C., \emph{Comparison of far-from-equilibrium work relations}, Comptes Rendus Physique. \textbf{8(5-6)} (2007), 495--506.


\bibitem{LimOh} Lim, J., Oh, Y.-G., \emph{Nonequilibrium thermodynamics as a symplecto-contact reduction and relative information entropy}, Rep.
Math. Phys. \textbf{92} (2023), 347-400.

    \bibitem{LY} Lieb, E.H., Yngvason, J., \emph{The physics and mathematics of the second law of thermodynamics}, Phys. Rep. \textbf{310} (1999), 1--96.

  \bibitem{MPE} Mankovsky, S., Polesya, S. and Ebert, H., 2024. Ultrafast spin dynamics: role of laser-induced modification of exchange parameters. arXiv preprint arXiv:2404.17066.


    \bibitem{M-nature} Mikhaylovskiy, R.V., Hendry, E., Secchi, A., Mentink, J.H., Eckstein, M., Wu, A., Pisarev, R.V., Kruglyak, V.V., Katsnelson, M.I., Rasing, T. and Kimel, A.V., 2015. Ultrafast optical modification of exchange interactions in iron oxides. Nature communications, 6(1), p.8190.


    \bibitem{Pl}Plascak, JA., \emph{Ensemble thermodynamic potentials of magnetic systems},
     Jour. Magnetism  Magnetic Mat., \textbf{468} (2018), 224--9.

     \bibitem{Prigogine} Prigogine, I.. \emph{Time, structure, and fluctuations},  Science., 1978 Sep 1;201(4358):777-85.

     \bibitem{Sakurai} Sakurai JJ., Napolitano J., \emph{Modern quantum mechanics}, Cambridge University Press; 2020 Sep 17.

\bibitem{Stirling-Nature}
Roy, N.,  Leroux, N.,  Sood, A.K. and Ganapathy, R.
\emph{Tuning the performance of a micrometer-sized Stirling engine through reservoir engineering},
Nature Comm.,   {\bf 12} (2021), Article 4927.
https://doi.org/10.1038/s41467-021-25230-1

        \bibitem{Str} Struchtrup, H., \emph{Entropy and the second law of thermodynamics—the nonequilibrium perspective},  Entropy, \textbf{22} (2020), 793.

\bibitem{vdSchaft}
van der Schaft, A., Liouville geometry of classical thermodynamics, Journal of Geometry and Physics
{\bf 170} (2021) 104365.

\bibitem{V} van der Schaft, A., Maschke, B., \emph{Geometry of thermodynamic processes}, Entropy \textbf{20} (2018), 925.

    \bibitem{Z} Zapolsky F., \emph{ Geometry of contactomorphism groups, contact rigidity, and contact dynamics in jet spaces}. International Mathematics Research Notices.
        \textbf{20} (2013), pp. 4687--711.

\end{thebibliography}
\end{document}